\newtheorem{thm}{Theorem}[section]
\newtheorem{prop}[thm]{Proposition}
\newtheorem{cor}[thm]{Corollary}
\newtheorem{lem}[thm]{Lemma}
\numberwithin{equation}{section}
\theoremstyle{definition}
\newtheorem{dfn}[thm]{Definition}
\theoremstyle{remark}
\newtheorem{rem}[thm]{Remark}
\renewcommand{\H}{\mathcal{H}}
\newcommand{\M}{\mathcal{M}}
\newcommand{\C}{\mathbb{C}}
\newcommand{\Z}{\mathbb{Z}}
\newcommand{\R}{\mathbb{R}}
\newcommand{\RR}{\mathcal{R}}
\newcommand{\Aut}[1]{{\rm Aut}(#1)}
\newcommand{\Out}[1]{{\rm Out}(#1)}
\newcommand{\Iso}[2]{{\rm Iso}(#1,#2)}
\begin{document}
\title{Locally Trivial W$^*$-Bundles}


\author{Samuel Evington}
\thanks{The first-named author is supported by an EPSRC Doctoral Training Grant (grant reference numbers EP/K503058/1 and EP/J500434). The second-named author was supported by the SFB~878 while he was employed by the University of M\"unster}
\address{SE: School of Mathematics and Statistics, University of Glasgow, 15 University Gardens, Glasgow G12 8QW, United Kingdom}
\email{s.evington.1@research.gla.ac.uk}	

\author{Ulrich Pennig}
\address{UP: Mathematics Institute, Cardiff University, Senghennydd Road, Cardiff CF24 4AG, United Kingdom}
\email{PennigU@cardiff.ac.uk}	

\begin{abstract}
We prove that a tracially continuous W$^*$-bundle $\M$ over a compact Hausdorff space $X$ with all fibres isomorphic to the hyperfinite II$_1$ factor $\RR$ that is locally trivial already has to be globally trivial. The proof uses the contractibility of the automorphism group $\Aut{\RR}$ shown by Popa and Takesaki. There is no restriction on the covering dimension of $X$.
\end{abstract}
\maketitle

\section{Introduction}
Tracially continuous W$^*$-bundles were introduced by Ozawa in \cite[Section 5]{Oz13}. They are similar in spirit to other notions of bundle in functional analysis, such as continuous $C(X)$-algebras \cite{Di63, Kas88} and Hilbert-$C(X)$-modules \cite{Di63, Ta79}. However, the fibres of a W$^*$-bundle are tracial von Neumann algebras and the topology is a mixture of the topology on the base space and the $2$-norm topology in the fibres. For example, the trivial W$^*$-bundle over a compact Hausdorff space $X$ with fibre the tracial von Neumann algebra $M$ is given by $C_{\sigma}(X,M)$, i.e.~the norm bounded, $2$-norm continuous maps $X \to M$. 

It was shown in \cite[Corollary 16]{Oz13} that a strictly separable W$^*$-bundle with all fibres isomorphic to the hyperfinite II$_1$ factor $\RR$ over a base space $X$ \emph{with finite covering dimension} is isomorphic to $C_{\sigma}(X,\RR)$. This automatic triviality is reminiscent of similar statements in the the context of Hilbert $C(X)$-modules \cite{Di63} and continuous $C(X)$-algebras with strongly self-absorbing fibres \cite{Hi07, DW08, DP13}. In the discussion that follows the proof, Ozawa raises the possibility of trivialisation results when $X$ is infinite dimensional (see also \cite[Question~3.14]{Bo15}). This leads one to ask what a non-trivial W$^*$-bundle over an infinite dimensional space $X$ with fibres isomorphic to $\RR$ could look like. 

We show, in our main result (Theorem~\ref{thm:hyperfinite_trivial}), that such a bundle would already have to be non-trivial \emph{locally}. More precisely, we consider W$^*$-bundles $\M$ that are locally trivial in the sense that every point $x \in X$ has a closed neighbourhood $Y \subseteq X$ such that the restriction $\M_Y$ is isomorphic to $C_{\sigma}(Y,\RR)$ as a W$^*$-bundle. We prove that this implies $\M \cong C_{\sigma}(X,\RR)$ as W$^*$-bundles. 

Local triviality no longer implies triviality for W$^*$-bundles with non-hyperfinite fibres. Indeed, we show in Section~\ref{sec:non-trivial} that there are non-trivial, but still locally trivial W$^*$-bundles already over such simple spaces as $S^1$. The fibres are given by II$_1$ factors with prescribed outer automorphism group, which were constructed in \cite{IPP08, VF08}. These II$_1$ factors do not absorb $\RR$ tensorially.

The study of W$^*$-bundles is motivated by work on the structure and classification of simple, nuclear C$^*$-algebras. In the light of the recent developments \cite{Ti15, El15, Go15}, the classification, by means of K-theoretic invariants, of simple, separable, nuclear, unital, infinite-dimensional C$^*$-algebras of finite nuclear dimension that satisfy the UCT is now complete. To identify finite nuclear dimension is, therefore, now a priority.

Given a simple, separable, nuclear, unital, infinite-dimensional C$^*$-algebra $A$ whose trace simplex $T(A)$ is a Bauer simplex, Ozawa showed that a certain tracial completion $\overline{A}^u$ of $A$ is a W$^*$-bundle over the space of extreme traces $\partial_eT(A)$ with fibres all isomorphic to $\mathcal{R}$. When $A$ has finite nuclear dimension, this bundle is trivial by combining results of \cite{Wi12} and \cite{Oz13}. In the reverse direction, the results of \cite{Oz13,MS12,KR14} (see also \cite{TWW15, Sa12}) and \cite{Bo15} (which builds on \cite{MS14, SWW15}) show that triviality of the bundle $\overline{A}^u$ combines with strict comparison, a mild condition on positive elements analogous to the order on projections in a II$_1$ factor being determined by their trace, to give finite nuclear dimension. This equivalence of regularity properties for C$^*$-algebras forms part of the Toms-Winter conjecture; see \cite[Section 6]{Ti15} for a full discussion.

The proof of our main result is based on the observation that each W$^*$-bundle $\M$ gives rise to a bundle $(B, p)$ in the sense of \cite[Chapter 2, Section 13.1]{FD88} where each fibre has the additional structure of a tracial von Neumann algebra (see Definition~\ref{dfn:TopologicalBundle}). The W$^*$-bundle $\M$ can be recovered by considering bounded, continuous sections of $(B,p)$. If $\M$ is locally trivial with all fibres isomorphic to $\RR$ in the above sense, then $(B,p)$ is locally trivial in the sense of algebraic topology and therefore associated to a principal bundle $P_B$ with structure group $\Aut{\RR}$ equipped with the $u$-topology. It follows from the contractibility of $\Aut{\RR}$ \cite{Po93} that $P_B$ has to be trivialisable, which translates into the triviality of $\M$.

The paper is organized as follows: Section $2$ contains the definitions of W$^*$-bundles and morphisms between them. We also recall the definition of fibres and generalise it to show that W$^*$-bundles can be restricted to closed subsets of the base space giving restriction morphisms $\M \to \M_Y$. This allows us to define local triviality. In Section $3$, we introduce the topological bundle $(B,p)$ of tracial von Neumann algebras associated to a W$^*$-bundle $\M$ over $X$. The topology on $B$ is such that one can retrieve $\M$ as the C$^*$-algebra of its bounded, continuous sections. This is analogous to the total space in the theory of $C(X)$-algebras (see for example \cite{Di63}). In Section $4$, we introduce the principal bundle $P_B$ and prove our main theorem. Finally, Section $5$ concerns the construction of non-trivial, locally trivial bundles with non-hyperfinite fibres.

\vspace{2mm}
\paragraph{\textbf{Acknowledgements}}
The research was undertaken whilst the first-named author was visiting the University of Münster, supported by the SFB 878 and a Graduate School Mobility Scholarship from the College of Science and Engineering at the University of Glasgow. He is grateful for the support and hospitality. Some parts of this work were completed while the second-named author was visiting the Center for Symmetry and Deformation (SYM) in Copenhagen. He would like to thank S{\o}ren Eilers for inviting him and the center administration for the hospitality. The authors would like to thank Stuart White for many helpful discussions and the referee for the comments on the original manuscript.





\section{W$^*$-bundles, Fibres and Restrictions}
This section contains the basic properties of tracially continuous W$^*$-bundles that we shall need in the sequel. We begin by stating the definition of a tracially continuous W$^*$-bundle to be used in this paper.
\begin{dfn} \label{dfn:WBundle}
A tracially continuous W$^*$-bundle over a compact Hausdorff space $X$ is a unital C$^*$-algebra $\M$ together with a unital embedding of $C(X)$ into the centre $Z(\M)$ of $\M$ and a conditional expectation $E:\M \rightarrow C(X)$ such that the the following axioms are satisfied:
\begin{itemize}
	\item[(T)] $E(a_1a_2) = E(a_2a_1)$, for all $a_1,a_2 \in \M$.
	\item[(F)] $E(a^*a) = 0 \Rightarrow a = 0$, for all $a \in \M$. 
	\item[(C)] The unit ball $\lbrace a \in \M: \|a\| \leq 1 \rbrace$ is complete with respect to the norm defined by $\|a\|_{2,u} = \|E(a^*a)^{1/2}\|_{C(X)}$.
\end{itemize}
\end{dfn}
\begin{rem}
This definition is a slight modification of Ozawa's original definition, which appears in \cite[Section 5]{Oz13}. The difference is that we do not require the base space to be metrisable. Close examination of \cite[Section 5]{Oz13} reveals that the metrisability of $X$ is not necessary for much of the basic theory of W$^*$-bundles. It is worth noting, however, that Owaza's condition of \emph{strictly separability} for a W$^*$-bundle (see \cite[Theorems 13 and 15]{Oz13}), which is equivalent to separability of $\M$ with respect to the $\|\cdot\|_{2,u}$-norm, implies that $C(X)$ is separable, so $X$ is metrisable. 
\end{rem}

We shall abbreviate \emph{tracially continuous W$^*$-bundle} to \emph{W$^*$-bundle} or even \emph{bundle} when there is no chance of confusion. We shall call $X$ the base space of the bundle, $\M$ the section algebra of the bundle, and $E$ the conditional expectation. We shall speak of the tracial axiom, the faithfulness axiom and the completeness axiom respectively. We shall often confuse the bundle itself with its section algebra, and speak of the bundle $\M$. 

It is instructive to consider the case where $X$ is the one point space $\lbrace * \rbrace$. We identify $\mathbb{C}$, $C(\lbrace * \rbrace)$, and $\mathbb{C}1_{\M}$. Now the data for a W$^*$-bundle over $X$ reduces to a unital C$^*$-algebra $\M$ and a state. The first two axioms then require this state to be a faithful trace. The effect of the third axiom is to ensure that the image of the unit ball of $\M$ under the GNS representation corresponding to this faithful trace is closed in the weak operator topology (see for example \cite[Lemma A.3.3]{Si08}). A W$^*$-bundle over a one point space is, therefore, just a \emph{tracial von Neumann algebra}: a (necessarily finite) von Neumann algebra together with a faithful, normal trace. 

We now recall the definition of morphisms between W$^*$-bundles and of trivial W$^*$-bundles, which are implicit in \cite[Section 5]{Oz13}.
\begin{dfn}
	Let $\M_i$ be a W$^*$-bundle over $X_i$ with conditional expectation $E_i$ for $i=1,2$. A morphism is a unital $*$-homomorphism $\alpha:\M_1 \rightarrow \M_2$ such that $\alpha(C(X_1)) \subseteq C(X_2)$ and the diagram
\begin{equation} \label{MorphismDef}
\xymatrix
{
	\M_1 \ar[r]^{\alpha} \ar[d]_{E_1} & \M_2 \ar[d]_{E_2} \\
	C(X_1) \ar[r]^{\alpha} & C(X_2) \\
}	
\end{equation}	
commutes. 
\end{dfn}
\begin{dfn}
For a given compact Hausdorff space $X$ and tracial von Neumann algebra $(M, \tau)$, the trivial W$^*$-bundle over $X$ with fibre $M$ is the C$^*$-algebra of $\|\cdot\|$-bounded, $\|\cdot\|_{2,\tau}$-continuous functions $X \rightarrow M$, denoted $C_\sigma(X,M)$, together with the following embedding and conditional expectation:
\begin{itemize}
	\item The embedding $\iota:C(X) \rightarrow Z(C_\sigma(X,M))$ is defined by $\iota(f)(x) = f(x)1_M$ for $x \in X$, $f \in C(X)$.
	\item The conditional expectation $E:C_\sigma(X,M) \rightarrow C(X)$ is defined by $E(a) = \tau \circ a$ for $a \in C_\sigma(X,M)$.
\end{itemize}
The axioms (T), (F) and (C) are satisfied.
\end{dfn} 

In \cite[Section 5]{Oz13}, Ozawa defines the fibre of a W$^*$-bundle $\M$ at $x \in X$ to be the image of $\M$ under the GNS representation $\pi_x:\M\rightarrow \H_x$ corresponding to the trace $a \mapsto E(a)(x)$. This trace induces a canonical faithful trace on the fibre of $\M$ at $x$. In the case of the trivial bundle $C_\sigma(X,M)$, it is more natural to use the evaluation map $\mathrm{eval}_x:C_\sigma(X,M) \rightarrow M$ than the GNS representation to define the fibre at $x$. Since both $*$-homomorphisms have kernel $\lbrace a \in C_\sigma(X,M): E(a^*a)(x) = 0 \rbrace$, the First Isomorphism Theorem gives us an isomorphism $\varphi$ such that the diagram
\begin{equation}
\xymatrix
{
	C_\sigma(X,M)  \ar[dr]^-{\pi_x} \ar[d]_-{\mathrm{eval}_x}  \\
	M  \ar[r]_-{\varphi}& \pi_x(C_\sigma(X,M)) \\
}	
\end{equation}
commutes. Hence, the two ways of defining fibres for a trivial bundle agree.	

In this paper, we find it most convenient to view the fibre of a general W$^*$-bundle at $x \in X$ as the quotient $\M/I_x$, where $I_x = \lbrace a \in \M: E(a^*a)(x) = 0 \rbrace$. We denote the fibre of $\M$ at $x$ by $\M_x$. We write $\tau_x$ for the induced faithful trace on this quotient and we write $a \mapsto a(x)$ for the canonical quotient map $\M \rightarrow \M_x$. Note that if $f \in C(X) \subseteq \M$, then $f - f(x)1_{\M} \in I_x$, so the image of $f$ in $\M_x$ is $f(x)1_{\M_x}$. This justifies the notation.      

We now fix a W$^*$-bundle $\M$ and show how the norms on the bundle relate to the corresponding norms on the fibres. 
\begin{prop}\label{SetTheoreticSections}
	The map
	\begin{align*}
		\Phi:\M &\rightarrow \prod_{x \in X} \M_x\\
		a &\mapsto (a(x))_{x \in X}
	\end{align*} 
	is an isometric $*$-homomorphism. In particular,
	\begin{equation}
		\|a\|_\M = \sup_{x \in X} \|a(x)\|_{\M_x}. \label{C*NormIsSup}
	\end{equation}
\end{prop}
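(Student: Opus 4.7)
The plan is to recognise $\Phi$ as an injective unital $*$-homomorphism from $\M$ into the C$^*$-algebraic $\ell^\infty$-product of the fibres, and then invoke the standard fact that any injective $*$-homomorphism between C$^*$-algebras is automatically isometric; the asserted supremum formula is then just the description of the norm on the product.

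First I would check that $\Phi$ makes sense as a map into a C$^*$-algebra. Each quotient $a \mapsto a(x)$ is a unital $*$-homomorphism $\M \to \M_x$, and hence contractive: $\|a(x)\|_{\M_x} \leq \|a\|_\M$ for every $x \in X$. In particular $\Phi(a)$ is norm-bounded uniformly in $x$, so $\Phi$ lands in the C$^*$-algebraic product $\prod_{x \in X} \M_x$. Since $\Phi$ is assembled from $*$-homomorphisms coordinatewise, it is itself a unital $*$-homomorphism.

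The key step, and the only place where the W$^*$-bundle axioms really enter, is injectivity of $\Phi$. Suppose $\Phi(a) = 0$, that is $a(x) = 0$ in $\M_x$ for every $x \in X$. By the definition of the fibre, $a \in I_x$ for every $x$, which is to say $E(a^*a)(x) = 0$ for each $x$; hence $E(a^*a) = 0$ as an element of $C(X)$. The faithfulness axiom (F) now forces $a = 0$, so $\Phi$ is injective.

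With both checks in place, the standard fact that an injective $*$-homomorphism between C$^*$-algebras is isometric gives $\|a\|_\M = \|\Phi(a)\| = \sup_{x \in X} \|a(x)\|_{\M_x}$. I do not foresee a real obstacle: the product-norm description and the injective-implies-isometric principle are general facts about C$^*$-algebras, and axiom (F) is precisely what rules out nonzero elements whose evaluation vanishes at every point.
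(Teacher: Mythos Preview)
Your proof is correct and follows essentially the same approach as the paper: show $\Phi$ is a $*$-homomorphism, use axiom (F) to deduce injectivity from $E(a^*a)=0$, and then invoke the standard fact that injective $*$-homomorphisms between C$^*$-algebras are isometric. The only difference is that you spell out explicitly why $\Phi$ lands in the $\ell^\infty$-product, which the paper leaves implicit.
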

\begin{proof}
	For each $x \in X$ the map $a \mapsto a(x)$ is a $*$-homomorphism. Hence, $\Phi$ is a $*$-homomorphism. Suppose $a(x) = 0$. Then $a \in I_x$ and so $E(a^*a)(x) = 0$. Hence, $a(x) = 0$ for all $x \in X$ implies that $E(a^*a) = 0$ and, consequently, $a = 0$ by the faithfulness axiom. Therefore, $\Phi$ is an injective $*$-homomorphism and thus isometric.
\end{proof}
\begin{prop} \label{Fibres2norm}
For fixed $a \in \M$, the map $x \mapsto \|a(x)\|_{2, \tau_x}$ is continuous. Furthermore, we have
\begin{equation} \label{Uniform2NormIsSup}
	\|a\|_{2,u} = \sup_{x \in X} \|a(x)\|_{2, \tau_x}.
\end{equation}
\end{prop}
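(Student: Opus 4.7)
The plan is to reduce everything to the function $E(a^*a) \in C(X)$ via the identity
\[
\|a(x)\|_{2,\tau_x}^2 \;=\; \tau_x\bigl(a(x)^*a(x)\bigr) \;=\; E(a^*a)(x).
\]
This identity is immediate from the definition of $\tau_x$: by construction $\tau_x$ is the induced trace on $\M_x = \M/I_x$ coming from the state $b \mapsto E(b)(x)$ on $\M$, so $\tau_x(b(x)) = E(b)(x)$ for every $b \in \M$, and specialising to $b = a^*a$ gives what we need.

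Once this identity is in hand, both claims are essentially formal. For continuity, the function $E(a^*a) \in C(X)$ is continuous and non-negative, hence so is $\sqrt{E(a^*a)}$; but this square root is exactly $x \mapsto \|a(x)\|_{2,\tau_x}$, so continuity follows. For the supremum formula, I compute
\[
\|a\|_{2,u} \;=\; \bigl\|E(a^*a)^{1/2}\bigr\|_{C(X)} \;=\; \sup_{x\in X} E(a^*a)(x)^{1/2} \;=\; \sup_{x\in X}\|a(x)\|_{2,\tau_x},
\]
where the first equality is the definition of the uniform $2$-norm and the middle equality is the definition of the sup norm on $C(X)$ applied to the continuous non-negative function $E(a^*a)^{1/2}$.

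There is no real obstacle here; the only point that requires care is making sure the identification $\tau_x(a(x)^*a(x)) = E(a^*a)(x)$ is justified from the quotient construction of $\M_x$ given just before the proposition, since $\tau_x$ was defined as the induced trace rather than in terms of a GNS construction. This is handled by noting that the state $a \mapsto E(a)(x)$ on $\M$ vanishes on $I_x$ (by the Cauchy--Schwarz inequality for the positive semi-definite form $\langle a,b\rangle = E(b^*a)(x)$), so it descends to a well-defined faithful trace on $\M_x$, which is exactly $\tau_x$.
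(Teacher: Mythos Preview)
Your proof is correct and follows the same approach as the paper, which simply observes that $\|a(x)\|_{2,\tau_x} = E(a^*a)(x)^{1/2}$ and leaves the rest to the reader. You have merely filled in the details that the paper omits.
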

\begin{proof}
	The proposition follows from the observation that $\|a(x)\|_{2, \tau_x} = E(a^*a)(x)^{1/2}$.
\end{proof}

We can now prove the key result that the fibres of a W$^*$-bundle are tracial von Neumann algebras. This result is due to Ozawa \cite[Theorem 11]{Oz13}, at least in the case where $X$ is metrisable. The proof given here avoids the use of Pedersen’s up-down theorem in Ozawa's proof by showing completeness of the unit ball via the argument in \cite[Proposition 10.1.12]{Di77}.  

\begin{thm}\label{thm:FibreAreVNAS}
	For each $x \in X$, $\M_x$ is a tracial von Neumann algebra.
\end{thm}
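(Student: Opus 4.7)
The plan is to identify $\M_x$ with the image of its GNS representation associated to the faithful tracial state $\tau_x$ (the trace property and faithfulness of $\tau_x$ being inherited from axioms (T) and (F) via the quotient construction $\M_x = \M/I_x$), and then to invoke the standard criterion (cf.\ \cite[Lemma A.3.3]{Si08}) that a C$^*$-algebra with a faithful tracial state whose $\|\cdot\|$-unit ball is complete in the induced $\|\cdot\|_{2,\tau_x}$-norm is automatically a von Neumann algebra under its GNS representation. Normality of $\tau_x$ then comes for free, as it is a vector state on the GNS Hilbert space. The substantive task is therefore to verify that the closed unit ball of $\M_x$ is complete with respect to $\|\cdot\|_{2,\tau_x}$, and this is where the completeness axiom (C) of $\M$ and the continuity result of Proposition \ref{Fibres2norm} combine.

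Given a Cauchy sequence $(b_n)$ in the unit ball of $\M_x$ with respect to $\|\cdot\|_{2,\tau_x}$, I would pass to a fast Cauchy subsequence satisfying $\|b_{n+1}-b_n\|_{2,\tau_x} < 2^{-n}$ and inductively construct lifts $a_n \in \M$ with $a_n(x) = b_n$, $\|a_n\| \le 1$, and $\|a_{n+1}-a_n\|_{2,u} \le 2^{1-n}$. For the inductive step, choose an auxiliary lift $\tilde a_{n+1} \in \M$ of $b_{n+1}$ with $\|\tilde a_{n+1}\| \le 1$ (possible since the quotient map $\M \to \M_x$ sends the closed unit ball onto the closed unit ball). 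By Proposition \ref{Fibres2norm} the continuous function $y \mapsto \|(\tilde a_{n+1}-a_n)(y)\|_{2,\tau_y}$ takes the value $\|b_{n+1}-b_n\|_{2,\tau_x} < 2^{-n}$ at $y=x$, so on an open neighbourhood $U$ of $x$ it is bounded by $2^{1-n}$. Pick a Urysohn bump function $f \in C(X)$ with $0 \le f \le 1$, $f(x)=1$, and $\mathrm{supp}(f) \subseteq U$, and set
\[
a_{n+1} := (1-f)\,a_n + f\,\tilde a_{n+1}.
\]
Pointwise in the fibre, $a_{n+1}(y) = (1-f(y))\,a_n(y) + f(y)\,\tilde a_{n+1}(y)$ is a convex combination of two contractions, so $\|a_{n+1}(y)\|_{\M_y} \le 1$, and hence $\|a_{n+1}\|_{\M} \le 1$ by the sup-of-fibres formula \eqref{C*NormIsSup}. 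The equality $f(x)=1$ forces $a_{n+1}(x)=b_{n+1}$, while $a_{n+1}-a_n = f(\tilde a_{n+1}-a_n)$ combined with the centrality of $f$ yields
\[
\|a_{n+1}-a_n\|_{2,u} = \sup_{y\in X} f(y)\,\|(\tilde a_{n+1}-a_n)(y)\|_{2,\tau_y} \le 2^{1-n}.
\]

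Once this Cauchy sequence $(a_n)$ is in hand inside the unit ball of $\M$, the completeness axiom (C) supplies a limit $a \in \M$ with $\|a\| \le 1$; the contractive estimate $\|a(x)-b_n\|_{2,\tau_x} \le \|a-a_n\|_{2,u} \to 0$ then identifies $a(x)$ as the $\|\cdot\|_{2,\tau_x}$-limit of $(b_n)$, which therefore lies in the unit ball of $\M_x$. The main obstacle is the inductive step itself: one must simultaneously preserve the value at $x$, stay within the global unit ball of $\M$, and compress the $2$-uniform distance between successive lifts. The key enabling ingredients are the continuity of fibrewise $2$-norms (Proposition \ref{Fibres2norm}), the sup-of-fibres formula \eqref{C*NormIsSup}, and the centrality of $C(X)$ inside $\M$, which allows multiplication by bump functions to commute cleanly with the conditional expectation.
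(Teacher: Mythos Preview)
Your proof is correct and follows essentially the same approach as the paper: both reduce to showing $\|\cdot\|_{2,\tau_x}$-completeness of the unit ball of $\M_x$, pass to a fast Cauchy subsequence, and inductively build contractive lifts in $\M$ by blending a naive lift of $b_{n+1}$ with the previous lift $a_n$ via a Urysohn bump function supported where the fibrewise $2$-norm of their difference is small, then invoke axiom (C). The only cosmetic difference is that the paper arranges the strict bound $\|a_{n+1}-a_n\|_{2,u} < 2^{-n}$ rather than your $\le 2^{1-n}$, which is immaterial.
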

\begin{proof}
	Fix $x \in X$. We need to show that the closed unit ball $\lbrace b \in M_x: \|b\| \leq 1 \rbrace$ is complete with respect to the $\|\cdot\|_{2,\tau_x}$-norm. Let $(b_n) \subseteq \M_x$ be a sequence that satisfies $\|b_n\| \leq 1$ for all $n \in \mathbb{N}$ and is a Cauchy sequence with respect to the $\|\cdot\|_{2,\tau_x}$-norm on $\M_x$. Since a Cauchy sequence will converge to the limit of any convergent sub-sequence, we may assume that $\|b_{n+1} - b_n\|_{2, \tau_x} < \frac{1}{2^n}$ without loss of generality.

We shall construct a sequence $(a_n) \subseteq \M$ inductively such that
\begin{align}
	a_n(x) &= b_n, \\
	\|a_n\| &\leq 1, \\
	\|a_{n+1} - a_n\|_{2, u} &< \frac{1}{2^n} 
\end{align}
for all $n \in \mathbb{N}$. Recall that with C$^*$-algebras we may always lift elements from quotient algebras without increasing the norm. Let $a_1$ be any such lift of $b_1$. Suppose now that $a_1, \ldots, a_n$ have been defined and have the desired properties. Let $a_{n+1}'$ be any lift of $b_{n+1}$ with $\|a_{n+1}'\| \leq 1$. Since
\begin{equation}
	\|a_{n+1}'(x) - a_n(x)\|_{2, \tau_x} < \frac{1}{2^n},
\end{equation}
we can, by continuity, find an open neighbourhood $U$ of $x$ such that 
\begin{equation} \label{eqn: supU}
	\sup_{y \in U}\|a_{n+1}'(y) - a_n(y)\|_{2, \tau_y} < \frac{1}{2^n}.
\end{equation}
We then take a continuous function $f:X \rightarrow [0,1]$ such that $f(x) =  1$ and $f(X \setminus U) \subseteq \lbrace 0 \rbrace$, and set $a_{n+1} = fa_{n+1}' + (1-f)a_n$. We have that $a_{n+1}(x) = a_{n+1}'(x)$ and, using (\ref{C*NormIsSup}), we see that $\|a_{n+1}\| \leq 1$. Finally, we have that 
\begin{equation}
	\|a_{n+1}(y) - a_n(y)\|_{2,\tau_y} = |f(y)|\|a_{n+1}'(y) - a_n(y)\|_{2,\tau_y}
\end{equation}
for $y \in X$. By considering  the cases $y \in U$ and $y \in X \setminus U$ separately in (\ref{Uniform2NormIsSup}), we get that $\|a_{n+1} - a_n\|_{2,u} < \frac{1}{2^n}$. This completes the inductive definition of the sequence $(a_n)$. 

The sequence $(a_n)$ converges to some $a \in \M$ with $\|a\| \leq 1$ because the unit ball of $\M$ is complete in the $\|\cdot\|_{2,u}$-norm. Set $b = a(x)$. Then $(b_n)$ converges in $\|\cdot\|_{2,\tau_x}$-norm to $b$ by (\ref{Uniform2NormIsSup}).
\end{proof}

We now turn to the definition of the restriction of a W$^*$-bundle $\M$ over $X$ to a closed subset $Y$. The result will be a W$^*$-bundle $\M_Y$ over $Y$ together with a morphism of W$^*$-bundles $\M \rightarrow \M_Y$. The procedure is closely modelled on the definition of the fibres as quotients. Indeed, when $Y$ is a singleton $\lbrace x \rbrace$, the result is the fibre $\M_x$ viewed as a W$^*$-bundle over a one point space. 
 
\begin{dfn}
Let $\M$ be a W$^*$-bundle over the compact Hausdorff space $X$ with conditional expectation $E$, and let $Y$ be a closed subspace of $X$. The restriction of the W$^*$-bundle $\M$ to $Y$ is the C$^*$-algebra $\M_Y = \M/I_Y$, where $I_Y = \lbrace a \in \M: E(a^*a)(x) = 0 \; \forall x \in Y \rbrace$, together with the following embedding and conditional expectation: 
\begin{itemize}
\item The embedding of $C(Y) \cong C(X)/C_0(X \setminus Y)$ in the centre of $\M/I_Y$ is the induced embedding coming from the embedding of $C(X)$ in the centre of~$\M$. 
\item The conditional expectation $E_Y:\M/I_Y \rightarrow C(X)/C_0(X \setminus Y) \cong C(Y)$ is the induced conditional expectation coming from the conditional expectation $E:\M \rightarrow C(X)$.
\end{itemize}  
\end{dfn}   
\begin{prop} \label{RestrictionDetails}
The restriction $\M_Y$, as defined above, is a well-defined W$^*$-bundle. The canonical $*$-homomorphism $\M \rightarrow \M / I_Y$ defines a morphism of W$^*$-bundles.
\end{prop}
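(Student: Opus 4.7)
The plan is to verify each ingredient of the W$^*$-bundle structure on $\M_Y$ and then check functoriality. Most verifications reduce easily to properties of $\M$; the real work is the completeness axiom (C), which I treat last by adapting the inductive lifting argument from Theorem~\ref{thm:FibreAreVNAS}.

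First I would check that $I_Y$ is a closed two-sided ideal, so that $\M/I_Y$ is a C$^*$-algebra. For the embedding of $C(Y)$, I would verify that $C(X) \cap I_Y = C_0(X \setminus Y)$: since $E$ is the identity on $C(X)$, a function $f \in C(X)$ lies in $I_Y$ iff $|f(x)|^2 = 0$ for all $x \in Y$, which is exactly $f \in C_0(X \setminus Y)$. Thus the natural map $C(Y) \cong C(X)/C_0(X\setminus Y) \to \M/I_Y$ is injective, and its image lies in the centre because $C(X)$ was central in $\M$. To descend $E$ to a map $E_Y\colon \M_Y \to C(Y)$, I would use the Kadison–Schwarz inequality for the conditional expectation: for $a \in I_Y$ and $x \in Y$, $|E(a)(x)|^2 \leq E(a^*a)(x) = 0$, so $E(I_Y) \subseteq C_0(X \setminus Y)$ and $E_Y$ is well-defined.

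Axioms (T) and (F) then fall out. Tracialness of $E_Y$ is inherited by passing $E(a_1 a_2) = E(a_2 a_1)$ to the quotient. For faithfulness, $E_Y(\bar{a}^*\bar{a}) = 0$ means $E(a^*a) \in C_0(X \setminus Y)$, i.e. $E(a^*a)(x) = 0$ for every $x \in Y$, so $a \in I_Y$ by definition and $\bar{a} = 0$.

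The completeness axiom (C) is the main obstacle. Given a Cauchy sequence $(\bar{a}_n)$ in the unit ball of $\M_Y$, I pass to a subsequence with $\|\bar{a}_{n+1} - \bar{a}_n\|_{2,u} < 2^{-n}$ and inductively build a lift $(a_n)$ in the unit ball of $\M$ that is Cauchy in $\|\cdot\|_{2,u}$. The step is: given $a_n$, pick any contractive lift $a_{n+1}'$ of $\bar{a}_{n+1}$ in $\M$; by Proposition~\ref{Fibres2norm} together with compactness of $Y$ and continuity of $x \mapsto \|a_{n+1}'(x) - a_n(x)\|_{2,\tau_x}$, there is an open neighbourhood $U \supseteq Y$ on which this function is uniformly $< 2^{-n}$. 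Urysohn's lemma (applicable since $X$ is compact Hausdorff, hence normal) yields $f\colon X \to [0,1]$ with $f|_Y = 1$ and $f|_{X \setminus U} = 0$, and then $a_{n+1} := f a_{n+1}' + (1-f) a_n$ is a contractive lift with $\|a_{n+1} - a_n\|_{2,u} < 2^{-n}$, exactly as in the proof of Theorem~\ref{thm:FibreAreVNAS}. The completeness axiom for $\M$ gives a limit $a$ in the unit ball, whose image in $\M_Y$ is the desired limit of $(\bar{a}_n)$.

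Finally, for the morphism statement, the quotient map $\pi\colon \M \to \M/I_Y$ is a unital $*$-homomorphism by construction, sends $C(X)$ into $C(Y)$ via the identification above, and the square \eqref{MorphismDef} commutes by the very definition of $E_Y$ as the factorisation of $E$ through $I_Y$.
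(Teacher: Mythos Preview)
Your proposal is correct and follows essentially the same route as the paper's proof: the ideal check, the identification $C(X)\cap I_Y=C_0(X\setminus Y)$, the descent of $E$ via the inequality $|E(a)(x)|^2\le E(a^*a)(x)$, and the inductive lifting argument for (C) with $Y$ playing the role of the point $x$ in Theorem~\ref{thm:FibreAreVNAS}. The only thing the paper makes explicit that you leave implicit is the identification of the fibre of $\M_Y$ at $y\in Y$ with $\M_y$ and the resulting formula $\|\bar a\|_{2,u}=\sup_{y\in Y}\|a(y)\|_{2,\tau_y}$, which you tacitly use when translating $\|\bar a_{n+1}-\bar a_n\|_{2,u}<2^{-n}$ into a pointwise bound on $Y$ and when concluding that $a|_Y$ is the limit of $(\bar a_n)$; this is immediate from the definition of $E_Y$, but worth stating.
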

\begin{proof}
We first show that $I_Y$ is a norm-closed, two-sided ideal of $\M$. Since $I_Y = \bigcap_{x \in Y} I_x$, it is enough to note that each $I_x$ is a norm-closed, two-sided ideal. This is standard: $I_x$ is the kernel ideal of the trace $a \mapsto E(a)(x)$.

The induced embedding arises because $C(X) \cap I_Y = C_0(X \setminus Y)$. We have a commuting diagram 
\begin{equation}\label{EmbeddingCD}
\xymatrix
{
	\M \ar[r] & \M_Y  \\
	C(X) \ar[r] \ar[u] & C(X) / C_0(X \setminus Y), \ar[u]\\
}	
\end{equation}
where the vertical maps are the central embedding and the horizontal maps are the quotient maps.

Since $|E(a)(x)| \leq E(a^*a)(x)^{1/2}$ for all $x \in X$, we have $E(a) \in C_0(X \setminus Y)$ for all $a \in I_Y$. Hence there is a unital completely positive map $E_Y$ such that the diagram
\begin{equation}\label{ExpectationCD}
\xymatrix 
{
	\M \ar[r] \ar[d]_{E} & \M_Y \ar[d]_{E_Y} \\
	C(X) \ar[r]  & C(X) / C_0(X \setminus Y), \\
}	
\end{equation}
where the horizontal maps are the quotient maps, commutes. A diagram chase shows that $E_Y$ is a conditional expectation onto $C(X) / C_0(X \setminus Y) \subseteq \M_Y$. By the definition of $E$ it is straightforward to check (T) and (F) are satisfied for $E_Y$.

All that remains is to prove (C) for $\M_Y$. For this, we'll need to pass to fibres. Write $a \mapsto a|_Y$ for the canonical map $\M \rightarrow \M_Y$. Let $y \in Y$. Since $I_Y \subseteq I_y$, the fibre map $a \mapsto a(y)$ factors through the restriction map $a \mapsto a|_Y$, and we have a commuting diagram for the central embeddings
\begin{equation}
\xymatrix
{
	\M \ar[r] & \M_Y \ar[r] &\M_y  \\
	C(X) \ar[r] \ar[u] & C(Y) \ar[r]\ar[u] & \mathbb{C} \ar[u]\\
}	
\end{equation}
and a commuting diagram for the conditional expectations 
\begin{equation}
\xymatrix 
{
	\M \ar[r] \ar[d]_{E} & \M_Y \ar[r] \ar[d]_{E_Y} & \M_y \ar[d]_{\tau_y}  \\
	C(X) \ar[r]  & C(Y) \ar[r] &\mathbb{C}. \\
}	
\end{equation}

Hence, we can identify the fibre of $\M$ at $y$ with the fibre of $\M_Y$ at $y$. We obtain the following analogues of (\ref{C*NormIsSup}) and (\ref{Uniform2NormIsSup}):
\begin{align}
	\|a|_Y\| &= \sup_{y \in Y} \|a(y)\| \label{C*NormIsSupII} \\
	\|a|_Y\|_{2,u} &= \sup_{y \in Y} \|a(y)\|_{2, \tau_y}. \label{Uniform2NormIsSupII}
\end{align}

Let $(b_n) \subseteq \M_Y$ be a sequence that satisfies $\|b_n\| \leq 1$ for all $n \in \mathbb{N}$ and is a Cauchy sequence with respect to the $\|\cdot\|_{2,u}$-norm on $\M_Y$. We need to find $b \in \M_Y$ with $\|b\| \leq 1$ such that $(b_n)$ converges to $b$ in the $\|\cdot\|_{2,u}$-norm on $\M_Y$. Since a Cauchy sequence will converge to the limit of any convergent sub-sequence, we may assume that $\|b_{n+1} - b_n\|_{2, u} < \frac{1}{2^n}$ without loss of generality.

In the same way as in the proof of Theorem~\ref{thm:FibreAreVNAS}, we now inductively construct a sequence $(a_n) \subseteq \M$ such that 
\begin{align}
	a_n|_Y &= b_n, \\
	\|a_n\| &\leq 1, \\
	\|a_{n+1} - a_n\|_{2,u} &< \frac{1}{2^n}. 
\end{align}
The role of the point $x$ in the proof of Theorem~\ref{thm:FibreAreVNAS} is taken over by the compact set $Y$. We use compactness of $Y$ to obtain an open set $U \supseteq Y$ such that (\ref{eqn: supU}) holds and Urysohn's Lemma to obtain the continuous function $f \colon X \to [0,1]$ with $f(Y) = \{1\}$ and $f(X \setminus U) \subseteq \{0\}$ needed in the construction.

The sequence $(a_n)$ converges to some $a \in \M$ with $\|a\| \leq 1$ because the unit ball of $\M$ is complete in the $\|\cdot\|_{2,u}$-norm. We set $b = a|_Y$. The convergence of $(b_n)$ to $b$ follows by (\ref{Uniform2NormIsSupII}).
 
The morphism claim follows from the commuting diagrams (\ref{EmbeddingCD}) and (\ref{ExpectationCD}).
\end{proof}



We end this section with the definition of local triviality for a W$^*$-bundle. We note in particular that the isomorphism class of the fibres for a locally trivial bundle is locally constant. 
\begin{dfn}
	We say that a W$^*$-bundle $\M$ over $X$ is \emph{locally trivial} if every $x \in X$ has a closed neighbourhood $Y$ such that $\M_Y$ is isomorphic to a trivial bundle over $Y$. 
\end{dfn}

\section{The Topological Bundle}
In this section we shall show how to combine the fibres of a W$^*$-bundle $\M$ to produce a bundle $(B,p)$ in the sense of \cite[Chapter 2, Section 13.1]{FD88}. The W$^*$-bundle, more precisely its section algebra, can be recovered as the collection of bounded, continuous sections of $(B,p)$. This builds on known results in the context of continuous fields of Hilbert spaces \cite[Section 1.2]{Di63} and Banach bundles \cite[Chapter 2, Section 13.4]{FD88}. We begin by recalling the general definition of a bundle from \cite[Chapter 2, Section 13.1]{FD88}.

\begin{dfn}
A bundle over a Hausdorff topological space $X$ is a pair $(B, p)$ where $B$ is a Hausdorff topological space and $p:B \rightarrow X$ is a continuous, open surjection. The fibre at $x \in X$ is the set $p^{-1}(x)$. 
\end{dfn}

By abuse of notation, we shall often speak of the bundle $B$ instead of the bundle $(B,p)$. We shall employ $B_x$ as an alternative notation for the fibre at $x$. We denote the set $\lbrace (b_1, b_2) \in B \times B: p(b_1) = p(b_2) \rbrace$ by $B \times_p B$ and endow it with the subspace topology coming from $B \times B$.  

We now describe the additional structure necessary for a bundle to be a topological bundle of tracial von Neumann algebras. For this definition it is best to view tracial von Neumann algebras abstractly as C$^*$-algebras with a tracial state such that the unit ball is complete with respect to the 2-norm (see \cite[Lemma A.3.3]{Si08}); an isomorphism of tracial von Neumann algebras is a trace preserving isomorphism of the C$^*$-algebras.  

\begin{dfn} \label{dfn:TopologicalBundle}
A topological bundle of tracial von Neumann algebras over the Hausdorff space $X$ is a bundle $(B,p)$ over $X$, together with operations, norms and traces making each fibre $B_x$ a tracial von Neumann algebra and satisfying the axioms listed below: 
\begin{itemize}
	\item[(i)] Addition, viewed as a map $B \times_p B \rightarrow B$, is continuous.
	\item[(ii)] Scalar multiplication, viewed as a map $\mathbb{C} \times B \rightarrow B$, is continuous.
	\item[(iii)] The involution, viewed as a map $B \rightarrow B$, is continuous.
	\item[(iv)] The map $X \rightarrow B$ which sends $x$ to the to the additive identity $0_x$ of $B_x$ is continuous and so is the analogous map $X \rightarrow B$ which sends $x$ to the to the multiplicative identity $1_x$ of $B_x$.
	\item[(v)] The map $\tau:B \rightarrow \mathbb{C}$ obtained by combining the traces on each fibre is continuous and so is the map $\|\cdot\|_2:B \rightarrow \mathbb{C}$ arising from combining the 2-norms from each fibre.
	\item[(vi)] A net $(b_\lambda) \subseteq B$ converges to $0_x$ whenever $p(b_\lambda) \rightarrow x$ and $\|b_\lambda\|_2 \rightarrow 0$.
\end{itemize}
For the last two axioms the map $\|\cdot\|:B \rightarrow [0, \infty]$ obtained by combining the C$^*$-norms from each fibre plays an auxillary role. We shall write $B_{\leq r}$ for the subspace $\lbrace b \in B: \|b\| \leq r \rbrace$ of $B$ for $r > 0$. 
\begin{itemize}
	\item[(vii)] Multiplication, viewed as a map $B \times_p B \rightarrow B$, is continuous on $\|\cdot\|$-bounded subsets. 
	\item[(viii)] The restriction $p|_{B_{\leq 1}}: B_{\leq 1} \rightarrow X$ is open.
\end{itemize}


We say that two topological bundles of tracial von Neumann algebras $(B_i, p_i)$  for $i=1,2$ are isomorphic if there are homeomorphisms $\psi$ and $\varphi$ such that the diagram 
\begin{equation}\label{Isomorphism} 
\xymatrix
{
	B_1 \ar[r]^{\varphi} \ar[d]_{p_1} & B_2 \ar[d]^{p_2} \\
	X_1 \ar[r]^{\psi} & X_2 \\
}	
\end{equation}
commutes and, for each $x_1 \in X_1$, $\varphi|_{p_1^{-1}(x_1)}:p_1^{-1}(x_1) \rightarrow p_2^{-1}(\psi(x_1))$ is an isomorphism of tracial von Neumann algebras. 
\end{dfn}

\begin{rem}
	The axioms are modelled on the definition of a Banach bundle given in \cite[Chapter 2, Section 13.4]{FD88}. Note, however, that the fibres are not complete in the $\|\cdot\|_2$-norm. We only have $\|\cdot\|_2$-norm completeness of the $\|\cdot\|$-norm closed unit ball. 
\end{rem}

The basic example of a topological bundle of tracial von Neumann algebras is $(X \times M, \pi_1)$ , where $X$ is a Hausdorff space, $M$ is a tracial von Neumann algebra, the topology on $X \times M$ is the product of the topology of $X$ and the 2-norm topology on $M$, and $\pi_1: X \times M \rightarrow X$ is the projection onto the first coordinate. This is the trivial bundle over $X$ with fibre $M$. We can now define local triviality for topological bundles of tracial von Neumann algebras.

\begin{dfn}\label{dfn:TopBundleLocTrivial}
	Let $(B, p)$ be a topological bundle of tracial von Neumann algebras over the Hausdorff space $X$. We say $(B, p)$ is locally trivial if every $x \in X$ has an open neighbourhood $U$ such that $(p^{-1}(U), p|_{p^{-1}(U)})$ is isomorphic to a trivial bundle over $U$.
\end{dfn}   
\begin{rem}
Note that when $X$ is compact Hausdorff, we can work with closed neighbourhoods in place of open neighbourhoods.
\end{rem}

Let $\M$ be a W$^*$-bundle over the compact Hausdorff space $X$. Set $B = \bigsqcup_{x \in X} \M_x$ and define  $p:B \rightarrow X$ by $p(b) = x$ whenever $b \in \M_x$. Note that, for each $x \in X$, the fibre $p^{-1}(x)$ can be identified with $\M_x$ and, therefore, endowed with operations, a norm and a trace that make it a tracial von Neumann algebra. In the following proposition, we define a topology on $B$ so that $(B,p)$ is a topological bundle of tracial von Neumann algebras. We then check that isomorphic W$^*$-bundles give rise to isomorphic topological bundles.
\begin{prop}\label{TopologyDetails}
	Let $\M$ be a W$^*$-bundle over the compact Hausdorff space $X$. Set $B = \bigsqcup_{x \in X} \M_x$ and define  $p:B \rightarrow X$ by $p(b) = x$ whenever $b \in \M_x$. For $a \in \M$, $\epsilon > 0$ and $U$ open in $X$, we set $V(a, \epsilon, U) = \lbrace b \in B: p(b) \in U, \|a(p(b)) - b\|_2 < \epsilon \rbrace$. 
\begin{itemize}
	\item[(a)] The collection $\mathcal{B}$ of all such $V(a, \epsilon, U)$ form a basis for a topology on $B$. Moreover, if $b \in B$ and $a \in \M$ is chosen with $a(p(b)) = b$, then the collection of $V(a, \epsilon, U)$ as $\epsilon$ ranges over positive reals and $U$ ranges over a neighbourhood basis of $p(b)$ is a neighbourhood basis of $b$.  
	\item[(b)] When $B$ is endowed with the topology generated by $\mathcal{B}$, $(B,p)$ is a topological bundle of tracial von Neumann algebras.
\end{itemize}	 
\end{prop}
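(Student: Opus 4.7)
For part (a), the plan is to verify the two basis axioms directly. Every $b \in B$ lies in some $V(a, \epsilon, U)$ because the quotient map $\M \to \M_{p(b)}$ is surjective, so there exists $a \in \M$ with $a(p(b)) = b$, and hence $b \in V(a, 1, X)$. For the intersection property, given $b \in V(a_1, \epsilon_1, U_1) \cap V(a_2, \epsilon_2, U_2)$ with $x = p(b)$, I would fix any lift $a \in \M$ with $a(x) = b$ and choose $\epsilon > 0$ satisfying $\epsilon + \|a(x) - a_i(x)\|_{2, \tau_x} < \epsilon_i$ for $i = 1, 2$. By the continuity of $y \mapsto \|(a-a_i)(y)\|_{2, \tau_y}$ from Proposition~\ref{Fibres2norm}, there is an open $U \subseteq U_1 \cap U_2$ containing $x$ on which $\|a(y) - a_i(y)\|_{2, \tau_y} < \epsilon_i - \epsilon$, and a triangle inequality then gives $V(a, \epsilon, U) \subseteq V(a_1, \epsilon_1, U_1) \cap V(a_2, \epsilon_2, U_2)$. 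Applying the same argument with $a$ fixed as a chosen lift of $b$ and $V(a', \epsilon', U')$ an arbitrary basic open set containing $b$ proves the neighbourhood-basis claim.

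For part (b), I would verify the eight axioms in order, using the neighbourhood basis from (a) at each fibre element. Axioms (i)-(iv) are largely formal: for addition, pick lifts $a_i \in \M$ with $a_i(x) = b_i$ and observe that $V(a_1 + a_2, \epsilon, U)$ eventually contains $b_\lambda^1 + b_\lambda^2$ by the triangle inequality; scalar multiplication, involution (using that $\|\cdot\|_{2, \tau_x}$ is $*$-invariant), and the zero/unit sections (realised by $0, 1_{\M} \in \M$) are analogous. Axiom (v) is immediate from $\tau_y(a(y)) = E(a)(y)$ and $\|a(y)\|_{2, \tau_y} = E(a^*a)(y)^{1/2}$, both continuous in $y$, combined with the Cauchy-Schwarz inequality $|\tau_y(\cdot)| \leq \|\cdot\|_{2, \tau_y}$ and the triangle inequality for $\|\cdot\|_{2, \tau_y}$. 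Axiom (vi) reduces to the identity $V(0, \epsilon, U) = \{b \in B : p(b) \in U,\ \|b\|_2 < \epsilon\}$, which is precisely the hypothesis.

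The substantive work lies in axioms (vii) and (viii). For (vii), on a $\|\cdot\|$-bounded region I would split
\[
\|a_1(y)a_2(y) - c_1 c_2\|_{2, \tau_y} \leq \|(a_1(y) - c_1)\, a_2(y)\|_{2, \tau_y} + \|c_1\, (a_2(y) - c_2)\|_{2, \tau_y}
\]
and bound each term using the tracial inequalities $\|xz\|_2 \leq \|x\| \|z\|_2$ and $\|xz\|_2 \leq \|x\|_2 \|z\|$ (the latter relying on traciality), so that smallness of $\|a_i(y) - c_i\|_{2, \tau_y}$ together with a uniform C$^*$-norm bound on the $c_i$ forces smallness of $\|a_1(y) a_2(y) - c_1 c_2\|_{2, \tau_y}$. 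For (viii), given an open $V \ni b$ with $\|b\| \leq 1$, I would crucially pick a lift $a \in \M$ of $b$ with $\|a\| = \|b\| \leq 1$ (by the standard C$^*$-lifting property) and shrink $\epsilon$, $U$ so that $V(a, \epsilon, U) \subseteq V$; since $a(y) \in V(a, \epsilon, U) \cap B_{\leq 1}$ for every $y \in U$, we obtain $U \subseteq p(V \cap B_{\leq 1})$, establishing openness.

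The main obstacle I anticipate is the interplay between the $2$-norm (which defines the basic opens) and the C$^*$-norm (which appears in (vii) and (viii)), together with the need to choose lifts with controlled C$^*$-norm whilst preserving the neighbourhood-basis structure from part (a). The key freedom that makes this work is that any fibre element $b \in \M_x$ admits a lift $a \in \M$ with $\|a\| = \|b\|$, so one can always arrange lifts that simultaneously realise the required $2$-norm smallness and any prescribed C$^*$-norm bound.
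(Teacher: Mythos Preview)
Your proposal is correct and follows essentially the same route as the paper: the same lift-and-triangle-inequality argument for part (a), and the same axiom-by-axiom verification for part (b), including the norm-preserving lift trick for (viii) and the H\"older-type splitting for (vii). The only items you omit are the verification that $B$ is Hausdorff and that $p$ is a continuous open surjection (the paper derives openness of $p$ from axioms (ii) and (viii) via scaling), but these are routine and do not constitute a gap.
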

\begin{proof}
(a) Given $b \in B$, let $x = p(b)$, so $b \in \M_x$. Let $a \in \M$ be a lift of $b$. Then, for any open neighbourhood $U$ of $x$ and $\epsilon > 0$, $b \in V(a, \epsilon, U)$. Therefore, $\bigcup_{V \in \mathcal{B}} V = B$.
	
	Suppose $b \in V(a_1, \epsilon_1, U_1) \cap V(a_2, \epsilon_2, U_2)$. Set $x = p(b)$, and let $a \in \M$ be a lift of $b \in \M_x$.  We have $x \in U_1 \cap U_2$ and 
	\begin{align}		
		\delta_1 := \|a(x) - a_1(x)\|_2 &< \epsilon_1 \\
		\delta_2 := \|a(x) - a_2(x)\|_2 &< \epsilon_2 \nonumber
	\end{align}
	Choose, by continuity, an open set $U$ such that $x \in U \subseteq U_1 \cap U_2$, and such that 
	\begin{align}
		\|a(x') - a_1(x')\|_2 &< \frac{\epsilon_1 + \delta_1}{2}  \\
		\|a(x') - a_2(x')\|_2 &< \frac{\epsilon_2 + \delta_2}{2}.\nonumber
	\end{align}
	for all $x' \in U$.
	Set $\epsilon = \min(\frac{\epsilon_1 - \delta_1}{2}, \frac{\epsilon_2 - \delta_2}{2})$. Now, if $b' \in V(a, \epsilon, U)$, then, for $i\in \{1,2\}$, $x' := p(b') \in U_i$ and
	\begin{align}
		\|a_i(x') - b' \|_2 &\leq \|a_i(x') - a(x')\|_2 + \|a(x') - b'\|_2 \\
		&< \frac{\epsilon_i + \delta_i}{2} + \epsilon \nonumber\\
		&\leq \epsilon_i. \nonumber
	\end{align}
	So, $b' \in V(a_i, \epsilon_i, U_i)$. Hence, $b \in V(a, \epsilon, U) \subseteq V(a_1, \epsilon_1, U_1) \cap V(a_2, \epsilon_2, U_2)$. 

	This proves that $\mathcal{B}$ does form the basis for a topology on $B$, and also gives the required neighbourhood basis for $b \in B$. 
	
(b) The topology defined by $\mathcal{B}$ is easily seen to be Hausdorff. Let $U$ be open in $X$. Let $b \in p^{-1}(U)$ with $x = p(b)$. Choose $a \in \M$ with $a(x) = b$. Then $b \in V(a, 1, U) \subseteq p^{-1}(U)$. So $p^{-1}(U)$ is open in $B$. Hence $p$ is continuous. It is clearly surjective. We now check the axioms of Definition \ref{dfn:TopologicalBundle} in turn, noting that a simple scaling argument shows that axioms (ii) and (viii) imply that the map $p:B \rightarrow X$ is open. 

(i) Let $b_1, b_2 \in B$ with $x = p(b_1) = p (b_2)$. Let $a_1,a_2 \in \M$ be lifts of $b_1, b_2 \in \M_x$. A basic open neighbourhood of $b_1 + b_2$ has the form $V(a_1 + a_2, \epsilon, U)$ for some $\epsilon > 0$ and open neighbourhood $U$ of $x$. Let $b_1' \in V(a_1, \tfrac{\epsilon}{2}, U)$ and $b_2' \in V(a_2, \tfrac{\epsilon}{2}, U)$, and suppose $x' = p(b_1') = p(b_2') \in U$. We have
\begin{align}
	\|(a_1(x') + a_2(x')) - (b_1' + b_2')\|_2 &\leq \|a_1(x') - b_1'\|_2 + \|a_2(x') - b_2'\|_2 \\
	&< \frac{\epsilon}{2} + \frac{\epsilon}{2}\nonumber\\
	&= \epsilon.\nonumber
\end{align}
So, $b_1' + b_2' \in  V(a_1 + a_2, \epsilon, U)$.
 
(ii) Let $\lambda \in \mathbb{C}$ and $b \in B$ with $x = p(b)$. Choose $a \in \M$ with $a(x) = b$. A basic neighbourhood of $\lambda b$ has the form $V(\lambda a, \epsilon, U)$ for some $\epsilon > 0$ and some open neighbourhood $U$ of $x$ in $B$. Set $K = \max(\|a\|_{2,u}, |\lambda|) + 1$ and $\delta = \min(\tfrac{\epsilon}{2K},1)$. Let $|\lambda' - \lambda| < \delta$ and $b' \in V(a, \delta, U)$ with $x' = p(b')$. Then  
\begin{align}
	\|\lambda'b' - \lambda a(x')\|_2 &\leq |\lambda'|\|b' - a(x')\|_2 + |\lambda' - \lambda|\|a(x')\|_2 \\
	&\leq (|\lambda| + 1)\|b' - a(x')\|_2 + |\lambda' - \lambda|\|a(x')\|_2 \nonumber\\
	&< K\delta + \delta K \nonumber \\
	&\leq \epsilon.\nonumber
\end{align}

(iii) This follows from the observation $V(a, \epsilon, U)^* = V(a^*, \epsilon, U)$.
 
(iv) For the continuity of the map $x \mapsto 1_x$ it suffices to observe that the open set $V(1, \epsilon, U)$ has preimage $U$ under this map for any $\epsilon > 0$. The continuity of $x \mapsto 0_x$ is similar. 

(v) We show the continuity of $\|\cdot\|_2$ on $B$. Continuity of $\tau$ then follows by the polarisation identity together with the continuity of $x \mapsto 1_x$. Let $b \in B$ and $a \in \M$, $x \in X$ be such that $a(x) = b$. Let $\epsilon > 0$. By Proposition \ref{Fibres2norm}, the map $y \mapsto \|a(y)\|_2$ is continuous. Hence, there is an open set $U \ni x$ such that 
\begin{equation}
	\Big|\|a(y)\|_2 - \|a(x)\|_2\Big| < \frac{\epsilon}{2}
\end{equation} 
for all $y \in U$. Let $b' \in V(a, \tfrac{\epsilon}{2}, U)$.  Writing $x' = p(b')$, we have
\begin{align}
	\Big|\|b'\|_2 - \|b\|_2\Big| &\leq \Big|\|b'\|_2 - \|a(x')\|_2\Big| + \Big|\|a(x')\|_2 - \|a(x)\|_2\Big| \\
	&< \frac{\epsilon}{2} + \frac{\epsilon}{2} \nonumber\\
	&= \epsilon. \nonumber \qedhere
\end{align}

(vi) This follows from the fact that a basic open neighbourhood of $0_x$ has the form $V(0,\epsilon, U)$ for some $\epsilon > 0$ and some open neighbourhood $U$ of $x$ in $X$.

(vii) Fix $K > 0$. Let $b_1, b_2 \in B$ with $\|\cdot\|$-norm bounded by $K$. Suppose $x = p(b_1) = p (b_2)$. Let $a_1,a_2 \in \M$ be norm-preserving lifts of $b_1, b_2 \in \M_x$. A basic open neighbourhood of $b_1b_2$ has the form $V(a_1a_2, \epsilon, U)$ for some $\epsilon > 0$ and open neighbourhood $U$ of $x$. Let $b_1' \in V(a_1, \tfrac{\epsilon}{2K}, U)$ and $b_2' \in V(a_2, \tfrac{\epsilon}{2K}, U)$. Assume $b_1'$ and $b_2'$ are $\|\cdot\|$-norm bounded by $K$, and that $x' = p(b_1') = p(b_2') \in U$. We have
\begin{align}
	\|a_1(x')a_2(x') - b_1'b_2'\|_2 &\leq \|a_1(x')\|\|a_2(x') - b_2'\|_2 + \|a_1(x') - b_1'\|_2\|b_2'\| \\
	&\leq K\|a_2(x') - b_2'\|_2 + K\|a_1(x') - b_1'\|_2\nonumber\\
	&< K\left(\frac{\epsilon}{2K} + \frac{\epsilon}{2K}\right)\nonumber\\
	&= \epsilon.\nonumber
\end{align}
So, $b_1'b_2' \in  V(a_1a_2, \epsilon, U)$.

(viii) Let $W$ be open in $B$ with $W \cap B|_{\leq 1} \neq \emptyset$. Let $x \in p(W \cap B|_{\leq 1})$. Choose $b \in W \cap B|_{\leq 1}$ such that $p(b) = x$. Lift $b \in \M_x$ to an element $a \in \M$ of the same norm. The open set $W$ contains a basic open neighbourhood of the form $V(a, \epsilon, U)$, where $\epsilon > 0$ and $U$ is a neighbourhood of $x$ in $X$. Hence, for all $x' \in U$, it follows that $a(x') \in W$ and $\|a(x')\|_{\M_{x'}} \leq 1$. Therefore $U \subseteq p(W \cap B|_{\leq 1})$ and so $p|_{B_{\leq 1}}: B_{\leq 1} \rightarrow X$ is open.
\end{proof}

\begin{prop} \label{Iso1}
	Let $\M_i$ be a W$^*$-bundle over $X_i$ with conditional expectation $E_i$ for $i=1,2$. Let $(B_i, p_i)$ be the corresponding topological bundle of tracial von Neumann algebras for $i=1,2$. If the W$^*$-bundles are isomorphic then the topological bundles are isomorphic.
\end{prop}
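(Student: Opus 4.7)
The plan is to construct the desired homeomorphisms $\psi$ and $\varphi$ directly from an isomorphism $\alpha\colon \M_1\to\M_2$ of W$^*$-bundles, and then verify the required properties by reducing everything to the observation that fibrewise the isomorphism $\alpha$ preserves both the C$^*$-norm and the $2$-norm.

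First I would extract the base homeomorphism. Since $\alpha$ restricts to a unital $*$-isomorphism $C(X_1)\to C(X_2)$, Gelfand duality furnishes a homeomorphism $\psi\colon X_1\to X_2$ with $\alpha(f)=f\circ\psi^{-1}$ for all $f\in C(X_1)$. Next, using the intertwining relation $E_2\circ\alpha=\alpha\circ E_1$, I would check that for every $x\in X_1$ one has $\alpha(I_{1,x})=I_{2,\psi(x)}$. This produces induced unital $*$-isomorphisms $\alpha_x\colon \M_{1,x}\to\M_{2,\psi(x)}$, and the same intertwining relation, read at the point $\psi(x)$, gives $\tau_{2,\psi(x)}\circ\alpha_x=\tau_{1,x}$. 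In particular each $\alpha_x$ is an isomorphism of tracial von Neumann algebras and preserves the $2$-norms on the fibres.

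I then define $\varphi\colon B_1\to B_2$ as the disjoint union of the maps $\alpha_x$, so that $p_2\circ\varphi=\psi\circ p_1$ and the diagram \eqref{Isomorphism} commutes by construction. The content is to verify that $\varphi$ is a homeomorphism. For this I would compute the image of the basic open sets of Proposition~\ref{TopologyDetails}(a) directly. Given $a_1\in\M_1$, $\epsilon>0$ and $U_1\subseteq X_1$ open, set $a_2:=\alpha(a_1)\in\M_2$ and $U_2:=\psi(U_1)$. For any $b\in B_1$ with $x':=p_1(b)$, the trace-preservation of $\alpha_{x'}$ gives
\begin{equation}
\|a_2(\psi(x'))-\varphi(b)\|_{2,\tau_{2,\psi(x')}}
=\|\alpha_{x'}(a_1(x')-b)\|_{2,\tau_{2,\psi(x')}}
=\|a_1(x')-b\|_{2,\tau_{1,x'}},
\end{equation}
and of course $p_2(\varphi(b))=\psi(x')\in U_2$ if and only if $x'\in U_1$. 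Consequently
\begin{equation}
\varphi\bigl(V(a_1,\epsilon,U_1)\bigr)=V(a_2,\epsilon,U_2),
\end{equation}
and this equality shows at once that $\varphi$ is open. Running exactly the same argument with the W$^*$-bundle isomorphism $\alpha^{-1}$ in place of $\alpha$ yields the symmetric identity, which shows that $\varphi$ is continuous. Hence $\varphi$ is a homeomorphism intertwining $p_1$ and $p_2$ via $\psi$, and by construction it is fibrewise an isomorphism of tracial von Neumann algebras.

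The only mildly delicate step is the computation showing $\varphi$ takes basic open sets to basic open sets; once the correct choice $a_2=\alpha(a_1)$ and the $2$-norm preservation of $\alpha_{x'}$ are in hand, the rest is formal. Every other assertion (the existence and trace-preservation of the induced fibre maps, and commutativity of \eqref{Isomorphism}) is an immediate diagram chase from the definition of a morphism of W$^*$-bundles.
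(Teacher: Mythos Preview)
Your proof is correct and follows essentially the same approach as the paper: extract the base homeomorphism via Gelfand duality, use the intertwining relation $E_2\circ\alpha=\alpha\circ E_1$ to obtain trace-preserving fibre isomorphisms, and verify that $\varphi$ is a homeomorphism via the identity $\varphi(V(a,\epsilon,U))=V(\alpha(a),\epsilon,\psi(U))$. Your write-up is somewhat more explicit about the ideal correspondence and the $2$-norm preservation, but the argument is the same.
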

\begin{proof}
Assume $\alpha:\M_1 \rightarrow \M_2$ is an isomorphism of the W$^*$-bundles. Then $\alpha$ restricts to an isomorphism $C(X_1) \rightarrow C(X_2)$, so induces a homeomorphism $\alpha^t:X_2 \rightarrow X_1$. Since $E_2(\alpha(a))(x_2) = \alpha(E_1(a))(x_2) = E_1(a)(\alpha^t(x_2))$, $\alpha$ induces an isomorphism between the fibres $(\M_1)_{\alpha^t(x_2)}$ and $(\M_2)_{x_2}$ for each $x_2 \in X_2$. Combining all these isomorphisms, we get a bijection $\varphi:B_1 \rightarrow B_2$ such that (\ref{Isomorphism}) holds with $\psi = (\alpha^t)^{-1}$. By considering the basic open neighbourhoods in $B_1$ and $B_2$, we see that $\varphi$ is a homeomorphism. Indeed, $\varphi(V_{\M_1}(a, \epsilon, U)) = V_{\M_2}(\alpha(a), \epsilon, \psi(U))$ for all $a \in \M_1$, $\epsilon > 0$, and $U$ open in $X_1$.
\end{proof}

In the other direction, given a topological bundle of tracial von Neumann algebras over a compact Hausdorff space, we can define a W$^*$-bundle. This comes from considering sections.

\begin{dfn}\label{DfnSections}
	Let $(B, p)$ be a topological bundle of tracial von Neumann algebras over $X$. A section of $(B,p)$ is a map $s:X \rightarrow B$ such that $p \circ s = \mathrm{id}_X$. A section is said to be bounded if $\sup_{x \in X} \|s(x)\| < \infty$. 
\end{dfn}
\begin{rem}
Since it is not required that $\|\cdot\|$ be continuous on $B$, continuous sections $s:X \rightarrow B$ are not automatically bounded in the sense of Definition \ref{DfnSections} even when $X$ is compact. 
\end{rem}

Let $(B, p)$ be a topological bundle of tracial von Neumann algebras over the compact Hausdorff space $X$. The set of bounded sections of $(B, p)$ endowed with fibrewise-defined operations and the uniform norm $\|s\| = \sup_{x \in X} \|s(x)\|$ is a C$^*$-algebra isomorphic to the product $\prod_{x \in X} p^{-1}(x)$. Since the fibres are tracial von Neumann algebras, the uniform 2-norm $\|s\|_{2,u} = \sup_{x \in X}\|s(x)\|_2$ is complete when restricted to the closed unit ball in uniform norm. Let $\M$ be the collection of bounded, continuous sections. Axioms (i)--(viii) ensure that $\M$ is a unital $^*$-subalgebra. The following proposition shows that continuity of sections is preserved under uniform-2-norm limits and, a fortiori, under uniform-norm limits. Therefore, $\M$ inherits the completeness properties of the algebra of bounded sections, in particular $\M$ is a C$^*$-algebra. 
\begin{prop}\label{prop:Completeness}
Let $s_n:X \rightarrow B$ be bounded, continuous sections of a topological bundle $(B, p)$ of tracial von Neumann algebras. Assume that the sequence $(s_n)$ converges in uniform 2-norm to the bounded section $s:X \rightarrow B$. Then $s$ is continuous.		
\end{prop}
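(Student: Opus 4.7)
The plan is to show continuity of $s$ at each $x_0\in X$ by reducing to the continuity of scalar-valued functions on $X$. The key lemma is that for every continuous section $t\colon X\to B$, the function $F_t\colon y\mapsto\|s(y)-t(y)\|_2$ is continuous.

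To prove the lemma, I would fix $n$ and observe that $y\mapsto s_n(y)-t(y)$ is a continuous map $X\to B$: the pair $(s_n,t)$ gives a continuous map $X\to B\times_p B$, and composing with the continuous subtraction from axioms (i) and (iii) yields continuity; post-composing with the continuous 2-norm from axiom (v) shows $F_t^{(n)}\colon y\mapsto\|s_n(y)-t(y)\|_2$ is continuous. The reverse triangle inequality then gives
\[|F_t(y)-F_t^{(n)}(y)|\le\|s(y)-s_n(y)\|_2\le\|s-s_n\|_{2,u},\]
which tends to $0$ by the hypothesis of uniform 2-norm convergence. Hence $F_t^{(n)}\to F_t$ uniformly, so $F_t$ is a uniform limit of continuous real-valued functions on $X$ and therefore continuous.

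With the lemma in hand, continuity of $s$ follows by showing that for every open $W\subseteq B$ and every $x_0\in s^{-1}(W)$, a basic open set of the form
\[V(t,\epsilon,U):=\{b\in B : p(b)\in U,\ \|b-t(p(b))\|_2<\epsilon\}\]
(with $t$ a continuous section, $U\subseteq X$ open with $x_0\in U$, and $\epsilon>0$) can be interposed between $s(x_0)$ and $W$. Each $V(t,\epsilon,U)$ is open in $B$, being the preimage of an open set under a continuous map built from the axioms; and a direct computation gives $s^{-1}(V(t,\epsilon,U))=U\cap F_t^{-1}([0,\epsilon))$, which is an open neighbourhood of $x_0$ contained in $s^{-1}(W)$ by the key lemma.

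The main obstacle I foresee is guaranteeing that such a $V(t,\epsilon,U)$ exists inside $W$ for the abstract bundles of Definition~\ref{dfn:TopologicalBundle}. In the setting of topological bundles arising from W$^*$-bundles via Proposition~\ref{TopologyDetails}, which is how this proposition is actually applied in the paper, the $V(t,\epsilon,U)$'s are \emph{by construction} a basis for the topology on $B$ and there is nothing further to prove. In full generality, the basis-like property should follow from axiom (vi) combined with (i) and (iii), which together characterise convergence in $B$ as base-point convergence plus 2-norm convergence of differences with a continuous section through the limit; the delicate point here is that a continuous section passing exactly through $s(x_0)$ may not be available, so one must substitute approximating sections $s_n$ and pass to limits carefully.
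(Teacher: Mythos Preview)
Your approach is conceptually sound and close to what the underlying argument does, but the proof is incomplete at exactly the point you flag. The paper handles this differently: it first verifies that $p$ is an open map (combining axiom (viii) with scaling via axiom (ii) and taking a union over radii), which places $(B,p)$ in the setting of Fell--Doran's Banach bundles, and then simply cites the argument of \cite[II.13.13]{FD88}. So the paper offloads the work to the literature after a short verification of openness.

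The Fell--Doran argument that the paper invokes essentially fills your gap as follows, and does not actually require openness of $p$ (that is only needed to match Fell--Doran's standing hypotheses). Given open $W\ni s(x_0)$, use continuity of addition at $(s(x_0),0_{x_0})$ to find open sets $W_1\ni s(x_0)$ and $W_2\ni 0_{x_0}$ in $B$ such that $b+c\in W$ whenever $(b,c)\in(W_1\times W_2)\cap(B\times_p B)$. By axiom (vi), $W_2$ contains some $V(0,\epsilon_2,U_2)$. Since the fibre topology is the $2$-norm topology (a consequence of (vi) and (i)), $s_n(x_0)\to s(x_0)$ in $B$; choose $n$ with $\|s-s_n\|_{2,u}<\epsilon_2$ and $s_n(x_0)\in W_1$. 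Then for $x$ in the open neighbourhood $U_2\cap s_n^{-1}(W_1)$ of $x_0$ one has $s(x)=s_n(x)+(s(x)-s_n(x))$ with $s_n(x)\in W_1$ and $s(x)-s_n(x)\in V(0,\epsilon_2,U_2)\subseteq W_2$, hence $s(x)\in W$. This is the ``substitute approximating sections $s_n$'' step you anticipated; the move you did not write down is using continuity of addition to split $W$, which decouples the choice of $\epsilon_2$ from the choice of $n$ and dissolves the apparent circularity.

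Once you argue this way, your key lemma about $F_t$ becomes unnecessary: with $t=s_n$ and $\epsilon_2>\|s-s_n\|_{2,u}$, the preimage $s^{-1}(V(s_n,\epsilon_2,U))$ is simply $U$. Your lemma is correct, but it is a detour. Finally, your observation that for bundles arising from W$^*$-bundles the $V(t,\epsilon,U)$'s form a basis by construction is true, but the proposition is stated and used for abstract topological bundles (e.g.\ in the discussion leading to Theorem~\ref{TheOtherOne}), so the general argument is genuinely required.
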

\begin{proof}
	Let $W$ be open in $X$. Then, by hypothesis, $p(W \cap B_{\leq 1})$ is open in $X$. By scaling, we get that $p(W \cap B_{\leq r})$ is open for all $r > 0$. Taking unions, $p(W)$ is open. Hence $p$ is an open map. We can now apply the argument in \cite[Chapter 2, Section 13.13]{FD88}, which makes no reference to the completeness of the fibres.  
\end{proof}
The additional data for a W$^*$-bundle over $X$ with section algebra $\M$ can now be easily defined and the axioms verified. We identify $f \in C(X)$ with the scalar valued section $x \mapsto f(x)1_x$. Such scalar valued sections are clearly bounded and are continuous since scalar multiplication and the section $x \mapsto 1_x$  are continuous. This gives an inclusion $C(X) \subseteq Z(\M)$. We define $E:\M \rightarrow C(X)$ by $s \mapsto \tau \circ s$. This is a conditional expectation from $\M$ onto the image of $C(X)$ in $\M$ and induces the uniform 2-norm on $\M$. Axiom (C) follows from Proposition \ref{prop:Completeness}. Axioms (T) and (F) follow fibrewise from the corresponding properties of a faithful trace. 

As before, we check that our construction is compatible with our notions of isomorphism.   
\begin{prop} \label{Iso2}
Let $(B_i, p_i)$ be a topological bundle of tracial von Neumann algebras over the compact Hausdorff space $X_i$ for $i=1,2$. Let $\M_i$ be the  W$^*$-bundle over $X_i$ with conditional expectation $E_i$ that comes from $(B_i, p_i)$. If the topological bundles are isomorphic then the W$^*$-bundles are isomorphic.
\end{prop}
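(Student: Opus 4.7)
The plan is to transport bounded continuous sections across the bundle isomorphism. Suppose $(\varphi,\psi)$ is an isomorphism of topological bundles of tracial von Neumann algebras, with $\psi \colon X_1 \to X_2$ a homeomorphism and $\varphi \colon B_1 \to B_2$ a homeomorphism satisfying $p_2 \circ \varphi = \psi \circ p_1$ and restricting fibrewise to trace-preserving $*$-isomorphisms of tracial von Neumann algebras. For $s \in \M_1$, define
\begin{equation*}
\alpha(s) = \varphi \circ s \circ \psi^{-1} \colon X_2 \to B_2.
\end{equation*}
The first step is to check that $\alpha(s) \in \M_2$. Because $p_2 \circ \varphi = \psi \circ p_1$, one verifies directly that $p_2 \circ \alpha(s) = \mathrm{id}_{X_2}$, so $\alpha(s)$ is a section; it is continuous as a composition of continuous maps. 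Since $\varphi$ restricts on each fibre to a $*$-isomorphism of C$^*$-algebras, and hence is isometric with respect to the C$^*$-norms, boundedness of $s$ transfers to boundedness of $\alpha(s)$ with $\|\alpha(s)\| = \|s\|$.

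The second step is to verify that $\alpha \colon \M_1 \to \M_2$ is a unital $*$-homomorphism. This is immediate fibrewise from the fact that $\varphi|_{p_1^{-1}(x_1)}$ is a unital $*$-homomorphism for every $x_1 \in X_1$. Next I check the W$^*$-bundle morphism properties. A scalar section $f \in C(X_1) \subseteq \M_1$ has the form $x_1 \mapsto f(x_1) 1_{x_1}$, and since $\varphi$ is unital on fibres,
\begin{equation*}
\alpha(f)(x_2) = \varphi\bigl(f(\psi^{-1}(x_2)) \, 1_{\psi^{-1}(x_2)}\bigr) = f(\psi^{-1}(x_2)) \, 1_{x_2},
\end{equation*}
so $\alpha(f) = f \circ \psi^{-1} \in C(X_2)$. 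For the conditional expectations, using that $\varphi$ is trace-preserving on fibres,
\begin{equation*}
E_2(\alpha(s))(x_2) = \tau_{2,x_2}\bigl(\varphi(s(\psi^{-1}(x_2)))\bigr) = \tau_{1,\psi^{-1}(x_2)}(s(\psi^{-1}(x_2))) = E_1(s)(\psi^{-1}(x_2)),
\end{equation*}
which coincides with $\alpha(E_1(s))(x_2)$ by the preceding calculation. Hence $E_2 \circ \alpha = \alpha \circ E_1$, and the morphism diagram commutes.

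Finally, applying the same construction to the inverse isomorphism $(\varphi^{-1},\psi^{-1})$ produces a two-sided inverse for $\alpha$, so $\alpha$ is an isomorphism of W$^*$-bundles. There is no real obstacle: the axioms on $(B_i,p_i)$ were designed precisely so that the fibrewise operations and traces are continuous enough to combine with the homeomorphism $\varphi$ and produce a well-defined section-level isomorphism. The only point requiring mild care is boundedness, which uses that a $*$-isomorphism of tracial von Neumann algebras (viewed as C$^*$-algebras) is automatically isometric for the C$^*$-norm.
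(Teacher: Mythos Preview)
Your proof is correct and follows essentially the same approach as the paper: define $\alpha(s)=\varphi\circ s\circ\psi^{-1}$, verify it lands in $\M_2$, check fibrewise that $\alpha$ is a unital $*$-homomorphism, and compute the two compatibility conditions $\alpha(C(X_1))\subseteq C(X_2)$ and $E_2\circ\alpha=\alpha\circ E_1$ exactly as you do. If anything, you are slightly more explicit than the paper in spelling out the section and boundedness checks and in constructing the inverse.
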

\begin{proof}
If the topological bundles are isomorphic and $\varphi$ and $\psi$ are as in (\ref{Isomorphism}) then $s \mapsto \varphi \circ s \circ \psi^{-1}$ defines a bijection between the bounded, continuous section of $p_1:B_1 \rightarrow X_1$ and those of $p_2:B_2 \rightarrow X_2$, that is a map $\alpha:\M_1 \rightarrow \M_2$. 

Since for each $x_1 \in X_1$, $\varphi|_{p_1^{-1}(x_1)}:p_1^{-1}(x_1) \rightarrow p_2^{-1}(\psi(x_1))$ is an isomorphism of tracial von Neumann algebras, $\alpha$ is a $*$-homomorphism of C$^*$-algebras. Furthermore, the following computations show that $\alpha$ is a morphism of W$^*$-bundles. Firstly, let $f_1 \in C(X_1) \subseteq Z(\M_1)$ and $x_2 \in X_2$. Then 
\begin{align}
	\alpha(f_1)(x_2) &= \varphi(f_1(\psi^{-1}(x_2))1_{\psi^{-1}(x_2)}) \\
		&= f_1(\psi^{-1}(x_2))1_{x_2},\nonumber
\end{align}
so $\alpha(f_1) = f_1 \circ \psi^{-1} \in C(X_2) \subseteq Z(\M_2)$. Secondly, let $s \in \M_1$ and $x_2 \in X$. Then 
\begin{align}
	E_2(\alpha(s))(x_2) &= \tau_{p_2^{-1}(x_2)}(\alpha(s)(x_2)) \\
	 &= \tau_{p_2^{-1}(x_2)}(\varphi(s(\psi^{-1}(x_2)))) \nonumber\\
	 &= \tau_{p_1^{-1}(\psi^{-1}(x_2))}(s(\psi^{-1}(x_2))) \nonumber\\
	 &= E_1(s)(\psi^{-1}(x_2))\nonumber\\
	 &= \alpha(E_1(s))(x_2),\nonumber 
\end{align}
so $E_2 \circ \alpha = \alpha \circ E_1$.
\end{proof}

We now investigate the inverse nature of the two constructions considered in the section. One direction is essentially \cite[Theorem 11]{Oz13}. The other direction reduces to the question of whether we can construct a bounded, continuous section through any point of the topological bundle.

\begin{thm} \label{Taka11}
Let $\M$ be a W$^*$-bundle over the compact Hausdorff space $X$. Let $(B,p)$ be the topological bundle constructed from $\M$. 
\begin{itemize}
\item[(a)] For each $a \in \M$, the map $s_a:X \rightarrow B$ given by $x \mapsto a(x) \in \M_x$ defines a bounded, continuous section of $(B,p)$. \item[(b)] Every bounded, continuous section of $(B, p)$ has the form $s_a$ for some $a \in \M$. 
\item[(c)] The map $a \mapsto s_a$ is an isomorphism between the W$^*$-bundle $\M$ and the W$^*$-bundle constructed from $(B, p)$. 
\end{itemize}
\end{thm}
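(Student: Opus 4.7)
The plan is to verify (a) and (c) by direct computation using the explicit description of the topology on $B$ from Proposition~\ref{TopologyDetails}, and to establish (b) by a partition-of-unity construction together with the completeness axiom (C); (b) is the only substantive step.

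For (a), I would invoke Proposition~\ref{SetTheoreticSections} to get $\|s_a(x)\|_{\M_x} \leq \|a\|_\M$ for every $x$, which gives boundedness. For continuity, a direct calculation shows that the preimage under $s_a$ of a basic open set $V(a', \epsilon, U)$ equals $\{x \in U : \|(a' - a)(x)\|_{2,\tau_x} < \epsilon\}$, which is open by Proposition~\ref{Fibres2norm} applied to $a' - a \in \M$.

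For (b), given a bounded continuous section $s$ with $K := \sup_{x} \|s(x)\|_{\M_x} < \infty$, the plan is to construct a sequence $a^{(n)} \in \M$ with $\|a^{(n)}\|_\M \leq K$ and $\|a^{(n)}(y) - s(y)\|_{2,\tau_y} < 1/2^n$ for all $y$, then apply axiom (C) (together with (\ref{Uniform2NormIsSup})) to extract a limit $a$. This limit must satisfy $s_a = s$ because each fibre $2$-norm is faithful. To build $a^{(n)}$, for each $x \in X$ I would pick a lift $a_x \in \M$ of $s(x) \in \M_x$ with $\|a_x\|_\M \leq K$ (such a norm-preserving lift exists because $\M \to \M_x$ is a quotient of unital C$^*$-algebras). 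Continuity of $s$ at $x$, combined with the neighbourhood basis in Proposition~\ref{TopologyDetails}(a), supplies an open $U_x \ni x$ with $\|a_x(y) - s(y)\|_{2,\tau_y} < 1/2^n$ for every $y \in U_x$; compactness of $X$ reduces the cover $\{U_x\}$ to a finite subcover $U_{x_1}, \ldots, U_{x_k}$, and a continuous partition of unity $\{f_i\} \subseteq C(X) \subseteq Z(\M)$ subordinate to it gives $a^{(n)} := \sum_i f_i a_{x_i} \in \M$. Fibrewise, $a^{(n)}(y) = \sum_i f_i(y) a_{x_i}(y)$ is a convex combination in $\M_y$ of elements of norm at most $K$, so $\|a^{(n)}\|_\M \leq K$ by (\ref{C*NormIsSup}), while
\[
\|a^{(n)}(y) - s(y)\|_{2,\tau_y} \leq \sum_i f_i(y)\|a_{x_i}(y) - s(y)\|_{2,\tau_y} < \frac{1}{2^n}
\]
because only indices $i$ with $y \in U_{x_i}$ contribute.

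For (c), the map $a \mapsto s_a$ is visibly a $*$-homomorphism since algebraic operations on both sides are defined fibrewise; it is bijective by (a), (b), and the injectivity in Proposition~\ref{SetTheoreticSections}; it sends $f \in C(X) \subseteq \M$ to the scalar section $x \mapsto f(x)1_x$, which is exactly how $C(X)$ embeds into the section algebra of $(B,p)$; and it intertwines the conditional expectations because $E_B(s_a)(x) = \tau_x(a(x)) = E(a)(x)$. The main obstacle is the partition-of-unity construction in (b): the local lifts $a_x$ are independent of each other, so one must glue them using the central subalgebra $C(X)$ in a way that simultaneously preserves the norm bound $K$ and the uniform $2$-norm estimate. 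The crucial point is that the $f_i$ lie in the centre of $\M$, which is why $a^{(n)}(y)$ is an honest centre-valued convex combination and inherits both bounds automatically.
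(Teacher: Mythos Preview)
Your argument is correct. Parts (a) and (c) match the paper's proof essentially verbatim: the paper also uses Proposition~\ref{SetTheoreticSections} for boundedness, the basic open sets from Proposition~\ref{TopologyDetails}(a) for continuity, and the same fibrewise verification of the W$^*$-bundle structure for (c).

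The difference lies in (b). The paper does not carry out the partition-of-unity construction; instead, it observes that for each $x_0 \in X$ and $\epsilon > 0$ one can find $a_0 \in \M$ and a neighbourhood $U$ with $\sup_{x \in U}\|s(x) - a_0(x)\|_2 < \epsilon$ (using continuity of $b \mapsto \|s(p(b)) - b\|_2$), and then invokes \cite[Theorem~11]{Oz13} as a black box to produce $a \in \M$ with $a(x) = s(x)$ for all $x$. Your proof is a self-contained unpacking of that citation: you globalise the local lifts via a partition of unity subordinate to a finite cover, control the C$^*$-norm by the convex-combination estimate, control the $\|\cdot\|_{2,u}$-distance to $s$ fibrewise, and then apply axiom~(C) to pass to a limit. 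The paper's route is shorter on the page because it outsources the work to Ozawa; yours has the advantage of being independent of \cite{Oz13} and of making explicit exactly where centrality of $C(X)$ and the completeness axiom enter.
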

\begin{proof}
(a) Let $a \in \M$. By construction $s_a$ is a section of $(B,p)$. We have $\|a(x)\|_{\M_x} \leq  \|a\|_{\M}$ for all $x \in X$, so the section $s_a$ is bounded. Let $W$ be open in $B$ and $x \in s_a^{-1}(W)$. Then $s_a(x) = a(x) \in W$. By Proposition \ref{TopologyDetails}(a), there exists $\epsilon > 0$ and an open neighbourhood $U$ of $x$ in $X$ such that $a(x) \in V(a, \epsilon, U) \subseteq W$. It follows that $x \in U \subseteq s_a^{-1}(W)$. Hence, $s_a$ is continuous. 

(b) Assume $s:X \rightarrow B$ is a continuous and bounded section. Let $x_0 \in X$ and $\epsilon > 0$. Choose $a_0 \in \M$ such that $a_0(x_0) = s(x_0)$. Since the function $x \mapsto \|s(x) - a_0(x)\|_2$ is continuous, there is a neighbourhood $U$ of $x_0$ such that 
\begin{equation}
	\sup_{x \in U} \|s(x) - a_0(x)\|_2 < \epsilon.
\end{equation}
By \cite[Theorem 11]{Oz13}, there exists $a \in \M$ such that $a(x) = s(x)$ for all $x \in X$.

(c) The map $a \mapsto s_a$ is a unital homomorphism of C$^*$-algebras. It is injective by Proposition \ref{SetTheoreticSections} and surjective by (b). For $f \in C(X) \subseteq Z(\M)$, $s_f$ is the scalar section $x \mapsto f(x)1_x$ (see the discussion preceding Proposition \ref{SetTheoreticSections}) and, for arbitrary $a \in \M$ and $x \in X$, $\tau(s_a(x)) = \tau_x(a(x)) = E(a)(x)$. Therefore $a \mapsto s_a$ is an isomorphism of W$^*$-bundles.  
\end{proof}

\begin{thm} \label{TheOtherOne}
	Let $(B,p)$ be a topological bundle of tracial von Neumann algebras over the compact Hausdorff space $X$. Let $\M$ be the W$^*$-bundle defined by considering bounded, continuous sections of $(B, p)$. Let $(\widetilde{B}, \widetilde{p})$ be the topological bundle constructed from the fibres of $\M$. Assume that, for all $b \in B$, there is $s \in \M$ with $s(p(b)) = b$. Then the topological bundles $(B,p)$ and $(\widetilde{B}, \widetilde{p})$ are isomorphic.
\end{thm}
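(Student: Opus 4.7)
The plan is to construct a homeomorphism $\varphi \colon \widetilde{B} \to B$ covering the identity $\psi = \mathrm{id}_X$ that restricts to an isomorphism of tracial von Neumann algebras on each fibre. Fibrewise, I would send the class $[s] \in \widetilde{B}_x = \M_x = \M / I_x$ of a bounded continuous section $s$ of $(B,p)$ to $s(x) \in B_x$. This is well defined because $E(s^\ast s)(x) = \tau_x(s(x)^\ast s(x)) = \|s(x)\|_2^2$, so the kernel of the evaluation $s \mapsto s(x) \in B_x$ is exactly $I_x$; the same computation gives injectivity of the induced $\varphi_x \colon \M_x \to B_x$. The map $\varphi_x$ is clearly a trace-preserving $\ast$-homomorphism, and it is surjective by the standing hypothesis that every $b \in B$ is hit by some $s \in \M$. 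Assembling the $\varphi_x$ yields a bijection $\varphi \colon \widetilde{B} \to B$ with $p \circ \varphi = \widetilde{p}$.

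Next, I would prove that $\varphi$ is a homeomorphism by matching neighbourhood bases. On the $\widetilde{B}$-side, Proposition~\ref{TopologyDetails}(a) supplies the basis $V(a,\epsilon,U)$ with $a \in \M$ satisfying $a(x) = \widetilde{b}$. On the $B$-side, I would introduce the analogous sets
\begin{equation*}
V_B(a, \epsilon, U) := \{b' \in B : p(b') \in U,\ \|a(p(b')) - b'\|_2 < \epsilon\}
\end{equation*}
and observe that they are open in $B$: the function $b' \mapsto \|a(p(b')) - b'\|_2$ is continuous on $p^{-1}(U)$ by continuity of the section $a$, axiom (i), and axiom (v).

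The main obstacle is to show that the $V_B(a,\epsilon,U)$ form a neighbourhood basis at $b \in B$ when $a$ is chosen with $a(p(b)) = b$ (such $a$ exists by hypothesis). My plan is first to use axioms (i), (iii), (v), and (vi) to establish the convergence criterion: a net $(b_\lambda)$ in $B$ converges to $b$ if and only if $p(b_\lambda) \to p(b)$ and $\|a(p(b_\lambda)) - b_\lambda\|_2 \to 0$. The forward direction is a direct application of continuity of $p$, of the section $a$, of subtraction, and of the $2$-norm. For the converse, axiom (vi) applied to $c_\lambda := a(p(b_\lambda)) - b_\lambda$ gives $c_\lambda \to 0_{p(b)}$, and then $b_\lambda = a(p(b_\lambda)) - c_\lambda \to b$ by continuity of the section $a$ and of subtraction. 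A standard directed-set argument on the neighbourhood filter of $b$ then upgrades this net characterisation to the statement that the $V_B(a,\epsilon,U)$ form a neighbourhood basis.

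With the neighbourhood bases in hand, the proof closes quickly. Because $\varphi$ is trace-preserving on fibres and intertwines $\widetilde{p}$ with $p$, one has $\varphi^{-1}(V_B(a,\epsilon,U)) = V(a,\epsilon,U)$, so $\varphi$ and $\varphi^{-1}$ map basic neighbourhoods to basic neighbourhoods. Combined with the fibrewise tracial von Neumann algebra isomorphisms, this gives the desired isomorphism of topological bundles $(B,p) \cong (\widetilde{B},\widetilde{p})$.
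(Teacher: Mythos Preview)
Your proposal is correct and follows essentially the same route as the paper: construct $\varphi$ fibrewise via evaluation, then show it is a homeomorphism by proving that the sets $V_B(a,\epsilon,U)$ form a neighbourhood basis in $B$, with axiom~(vi) supplying the key step. The only cosmetic difference is that the paper first establishes the neighbourhood basis at the zero section and then translates via the self-homeomorphism $b \mapsto s_0(p(b)) - b$, whereas you package the same idea as a net-convergence criterion at a general point followed by the standard contrapositive directed-set argument.
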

\begin{proof}
	Write $E$ for the conditional expectation of $\M$. For each $x \in X$, consider the evaluation map $\varphi_x:\M \rightarrow p^{-1}(x)$ given by $s \mapsto s(x)$. This is a homomorphism of C$^*$-algebras and, by our assumption, it is surjective. Since $\tau(s(x)) = E(s)(x)$ for all $s \in \M$ and the trace on $p^{-1}(x)$ is faithful, we get an induced isomorphism of tracial von Neumann algebras $\overline{\varphi}_x: \M_x \rightarrow p^{-1}(x)$. Combining all such maps, we get a bijection $\varphi:\widetilde{B} \rightarrow B$, such that the diagram
\begin{equation}
\xymatrix
{
	\widetilde{B} \ar[r]^{\varphi} \ar[d]_{\widetilde{p}} & B \ar[d]^{p} \\
	X \ar[r]^{\mathrm{id}_X} & X \\
}	
\end{equation}
commutes. It remains to show that $\varphi$ is a homeomorphism. Note that, via our convention of writing $s(x)$ for the image of $s \in \M$ in $\M_x$, $\varphi$ can be viewed as the identity map on $B$. Thus proving that $\varphi$ is a homeomorphism amounts to showing that the topology on $B$, satisfying the axioms for a topological bundle, has a basis consisting of the sets $V(s, \epsilon, U) = \lbrace b \in B: p(b) \in U, \|s(p(b)) - b\|_2 < \epsilon \rbrace$ for $s \in \M$, $\epsilon > 0$ and $U$ open in $X$. 

Each such set $V(s, \epsilon, U)$ is open in $B$ because the axioms for a topological bundle ensure that the map $F \colon B \rightarrow \mathbb{R} \times X$ given by $b \mapsto (\|s(p(b)) - b\|_2, p(b))$ is continuous. We complete the proof by showing that the set of all such $V(s, \epsilon, U)$ contains a neighbourhood basis for each point of $B$. Axiom (ii) for topological bundles gives that $V(0, \epsilon, U)$ as $\epsilon$ ranges over the positive reals and $U$ ranges over a neighbourhood basis for $x \in X$ form a neighbourhood basis for $0_x$. Let $b_0 \in B$ and $s_0$ be a bounded, continuous section with $s_0(p(b_0)) = b_0$. Since the map $G:B \rightarrow B$ given by $b \mapsto s_0(p(b)) - b$ is a homeomorphism of $B$, we see that $V(s_0, \epsilon, U)$ as $\epsilon$ ranges over the positive reals and $U$ ranges over a neighbourhood basis for $p(b_0)$ form a neighbourhood basis for $b_0$. 
\end{proof}
\begin{rem}
	In all the topological bundles $(B, p)$ that we consider in this paper, the assumption that there is a bounded, continuous section through every point of the bundle space $B$ will be satisfied. Indeed it holds by construction for the topological bundles coming from W$^*$-bundles. It is also clear when $(B, p)$ is a locally trivial fibre bundle over a compact Hausdorff space with fibre a fixed tracial von Neumann algebra $M$, since such bundles look locally like the projection map $X \times M \rightarrow X$. 
\end{rem}


We observe that the topological bundle corresponding to a trivial W$^*$-algebra $C_\sigma(X, M)$, where $M$ is a fixed tracial von Neumann algebra and $X$ is a compact Hausdorff space, is $(X \times M, \pi_1)$, where the topology on $X \times M$ is the product of the topology of $X$ and the 2-norm topology on $M$ and $\pi_1: X \times M \rightarrow X$ is the projection onto the first coordinate. Thus, the notion of triviality for a topological bundle of tracial von Neumann algebras matches up with that for a W$^*$-bundle. We show below that the notions of restriction to a closed subset also match up and, therefore, so do the natural notions of local triviality.

\begin{prop} \label{Restr}
	Let $X$ be a compact Hausdorff space and $Y$ a closed subset. 
	\begin{itemize}
		\item[(a)] Let $\M$ be a W$^*$-bundle over $X$ and $(B,p)$ the corresponding topological bundle. Let $(B_Y, p_Y)$ be the topological bundle corresponding to the W$^*$-bundle $\M_Y$. There exists a homeomorphism $\varphi$ such that the diagram  
		\begin{equation*}
		\xymatrix
		{
			B_Y \ar[r]^{\varphi} \ar[d]_{p_Y} & p^{-1}(Y) \ar[d]^{p|_{p^{-1}(Y)}} \\
			Y \ar[r]^{\mathrm{id}_Y} & Y \\
		}	
		\end{equation*}
		commutes, which induces an isomorphism of tracial von Neumann algebras in each fibre.
		\item[(b)] Let $(B, p)$ be a topological bundle of tracial von Neumann algebras over $X$. Let $\M$ be the W$^*$-bundle arising from bounded, continuous sections of $(B,p)$. Then $\M_Y$ is isomorphic to the W$^*$-bundle $\widetilde{\M}$ of bounded, continuous sections of $(p^{-1}(Y),p|_{p^{-1}(Y)})$. 
	\end{itemize}
\end{prop}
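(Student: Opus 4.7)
My plan splits along the two parts: (a) is essentially a topology verification, and (b) will then follow formally by combining (a) with results already established in Section~3.

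For (a), I would begin by recalling from the proof of Proposition \ref{RestrictionDetails} that the canonical map $\M_y \to (\M_Y)_y$ is an isomorphism of tracial von Neumann algebras for every $y \in Y$. This immediately delivers a set-theoretic bijection $\varphi \colon B_Y \to p^{-1}(Y)$ that is compatible with the projections to $Y$ and induces an isomorphism of tracial von Neumann algebras on each fibre; the commuting square in the statement is automatic. The only nontrivial point is to verify that $\varphi$ is a homeomorphism between the topology on $B_Y$ produced by Proposition \ref{TopologyDetails} applied to $\M_Y$ and the subspace topology on $p^{-1}(Y)$ inherited from $B$. I would compare bases directly: the basic open sets in $p^{-1}(Y)$ are $V_{\M}(a,\epsilon,U) \cap p^{-1}(Y)$ for $a \in \M$, $\epsilon > 0$ and $U$ open in $X$, while the basic open sets in $B_Y$ are $V_{\M_Y}(\tilde a,\epsilon,V)$ for $\tilde a \in \M_Y$, $\epsilon > 0$ and $V$ open in $Y$. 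Surjectivity of the quotient $\M \to \M_Y$, together with the identity $(a|_Y)(y) = a(y)$ in the identified fibre $\M_y = (\M_Y)_y$, will give the correspondence $\varphi(V_{\M_Y}(a|_Y,\epsilon,U \cap Y)) = V_{\M}(a,\epsilon,U) \cap p^{-1}(Y)$, so the two bases match and $\varphi$ is a homeomorphism.

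For (b), my plan is conceptual rather than computational. I would apply Theorem \ref{Taka11} to the W$^*$-bundle $\M_Y$ to obtain an isomorphism of W$^*$-bundles between $\M_Y$ and the bundle of bounded continuous sections of its associated topological bundle $(B_Y,p_Y)$. By part (a), $(B_Y,p_Y) \cong (p^{-1}(Y), p|_{p^{-1}(Y)})$ as topological bundles of tracial von Neumann algebras, and Proposition \ref{Iso2} then transfers this isomorphism to the level of W$^*$-bundles of bounded continuous sections. Composing the two isomorphisms yields $\M_Y \cong \widetilde{\M}$ as W$^*$-bundles.

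The principal obstacle I expect is the bookkeeping in (a): keeping track of which bundle each basic open set lives in, and confirming that the two indexings by $(a,\epsilon,U)$ and $(\tilde a,\epsilon,V)$ parametrise the same collection of sets under $\varphi$. Once the bases are aligned, part (b) is a clean formal consequence of part (a) together with Theorem \ref{Taka11} and Proposition \ref{Iso2}.
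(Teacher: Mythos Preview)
Your treatment of part (a) matches the paper's exactly, including the key identity $\varphi(V_{\M_Y}(a|_Y,\epsilon,U\cap Y)) = V_{\M}(a,\epsilon,U)\cap p^{-1}(Y)$.

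For part (b) you take a genuinely different route from the paper. The paper argues directly: it sends a section $s \in \M$ to its restriction $s|_Y \in \widetilde{\M}$, identifies the kernel as $I_Y$, obtains an induced isometric embedding $\alpha \colon \M_Y \to \widetilde{\M}$ compatible with the $C(Y)$-structure and conditional expectations, and then proves surjectivity by a partition-of-unity approximation (\`a la \cite[Lemma 10.1.11]{Di77}) combined with $\|\cdot\|_{2,u}$-completeness of the unit ball. Your route via Theorem~\ref{Taka11}, part (a), and Proposition~\ref{Iso2} is tidier and reuses the machinery already built, but there is a gap you should close: part (a) is stated for the topological bundle constructed \emph{from} a W$^*$-bundle $\M$, whereas in (b) you are handed an abstract $(B,p)$ and $\M$ is defined as its section algebra. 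To invoke part (a) you must first identify the given $(B,p)$ with the topological bundle associated to $\M$; this is precisely Theorem~\ref{TheOtherOne}, which carries the extra hypothesis that through every point of $B$ there passes a bounded continuous section. You never cite this step. In fairness, the paper's partition-of-unity argument also tacitly relies on sections being fibrewise dense, so neither proof covers (b) without that standing assumption (acknowledged in the remark after Theorem~\ref{TheOtherOne}); but the paper's argument makes the approximation explicit, while yours hides the dependence inside an unmentioned appeal to Theorem~\ref{TheOtherOne}.
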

\begin{proof}
	(a) For $y \in Y$, the fibre $(\M_Y)_y$ of $\M_Y$ can be identified with the fibre $\M_y$ of $\M$, via the map $a|_Y(y) \mapsto a(y)$. Combining all these maps, we obtain a bijection $\varphi$ such that the diagram commutes. Considering basic open neighbourhoods we see that $\varphi$ is a homeomorphism. Indeed, $\varphi(V_{\M_Y}(a|_Y, \epsilon, U \cap Y)) = V_{\M}(a, \epsilon, U) \cap p^{-1}(Y)$ for all $a \in \M$, $\epsilon > 0$, and $U$ open in $X$.   
	
	(b) Write $E$ for the conditional expectation of $\M$ and $\widetilde{E}$ for the conditional expectation on $\widetilde{\M}$. Restricting a bounded, continuous section $s:X \rightarrow B$ of $p$ to $Y$ gives a continuous bounded section of $(p^{-1}(Y),p|_{p^{-1}(Y)})$. This defines a homomorphism of C$^*$-algebras $\M \rightarrow \widetilde{\M}$. The kernel of this homomorphism is the ideal $I_Y = \lbrace s \in \M: E(s^*s)(y) = 0 \; \forall y \in Y \rbrace$. So we get an induced isometric homomorphism of C$^*$-algebras $\alpha:\M_Y \rightarrow \widetilde{\M}$. This homomorphism restricts to the identity map on the central copies of $C(Y)$ in $\M_Y$ and $\widetilde{\M}$, and the diagram
\begin{equation} 
\xymatrix
{
	\M_Y \ar[r]^{\alpha} \ar[d]_{E_Y} & \widetilde{\M} \ar[d]_{\widetilde{E}} \\
	C(Y) \ar[r]^{\mathrm{id}} & C(Y) \\
}
\end{equation}  
commutes. In particular, $\alpha$ preserves the uniform 2-norm. The argument to show that $\alpha$ is surjective has two parts. First, using a partition of unity argument as in \cite[Lemma 10.1.11]{Di77}, one shows that, for any continuous section $s: Y \rightarrow B$ with $\|s(y)\| \leq 1$ for all $y \in Y$ and any $\epsilon > 0$, there is a bounded, continuous section $\overline{s}:X \rightarrow B$ with $\|\overline{s}(x)\| \leq 1$ for all $x \in X$ and $\|s(y) - \overline{s}(y)\|_2 < \epsilon$. This implies that the $\|\cdot\|$-norm closed unit ball of $\M_Y$ has $\|\cdot\|_{2,u}$-dense image in the $\|\cdot\|$-norm closed unit ball of $\widetilde{M}$. The completeness of the $\|\cdot\|$-norm closed units balls in $\|\cdot\|_{2,u}$-norm then implies that $\alpha$ is surjective.
\end{proof}

\section{Locally Trivial Bundles}

In this section, we prove our main result: a locally trivial W$^*$-bundle with all fibres isomorphic to the hyperfinite II$_1$ factor $\RR$ is trivial. 

In fact, the only property of the II$_1$ factor $\mathcal{R}$ that we shall need is that its automorphism group is contractible. We begin, therefore, with a brief discussion of possible topologies on the automorphism group $\Aut{M}$ of a tracial von Neumann algebra $M$, and note that in the factor case they coincide. 

\begin{dfn} \label{dfn:top_on_Aut}  \cite[Definition 3.4]{Ha75}
Let $M$ be a von Neumann algebra with a faithful, normal trace $\tau \colon M \to \C$. Let $\mathcal{B}_*(M)$ be the set of bounded $\sigma$-weakly continuous operators on~$M$.
\begin{itemize}
	\item The \emph{$u$-topology} on $\mathcal{B}_*(M)$ is the topology generated by the seminorms
\(
	\lVert T \rVert^u_{\varphi} = \lVert \varphi \circ T \rVert 
\)
for all $\varphi \in M_*$.
	\item The \emph{$p$-topology} on $\mathcal{B}_*(M)$ is defined via the seminorms $\lVert T \rVert^p_{\varphi,a} = \lvert (\varphi \circ T)(a) \rvert$ for all $a \in M$ and $\varphi \in M_*$.
	\item The \emph{pointwise $2$-norm topology} on $\mathcal{B}_*(M)$ is induced by the seminorms $\lVert T \rVert^{2,\tau}_a =  \tau(T(a^*a))^{1/2}$ for all $a \in M$.
\end{itemize}
\end{dfn}

\begin{lem} \label{lem:top_on_Aut}
Let $M$ be a II$_1$ factor and denote the faithful, normal trace by $\tau$. The three topologies from Definition~\ref{dfn:top_on_Aut} agree on $\Aut{M}$. 
\end{lem}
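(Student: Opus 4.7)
The plan is to exploit that on a II$_1$ factor $M$ the faithful normal trace $\tau$ is unique, so every element of $\Aut{M}$ automatically preserves $\tau$ and therefore extends to isometries of the non-commutative $L^p(M,\tau)$ spaces for $1\leq p\leq \infty$. Combined with the standard isometric identification $M_* \cong L^1(M,\tau)$ via $y \mapsto \tau(\,\cdot\,y)$, this places all three topologies on common ground and allows direct comparison.

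The inclusion $u \supseteq p$ is formal, since $\lvert(\varphi \circ T)(a)\rvert \leq \lVert\varphi \circ T\rVert\,\lVert a\rVert$. To see that the $p$-topology coincides with the pointwise $2$-norm topology on $\Aut{M}$, I would use, in one direction, the identity
\begin{equation*}
\lVert T_\lambda(a) - T(a)\rVert_2^2 = 2\lVert a\rVert_2^2 - 2\operatorname{Re}\tau\bigl(T(a)^* T_\lambda(a)\bigr),
\end{equation*}
valid because $T_\lambda$ and $T$ preserve $\tau$. Since $x \mapsto \tau(T(a)^*x)$ belongs to $M_*$, $p$-convergence forces pointwise $2$-norm convergence. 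For the reverse, I would write an arbitrary $\varphi \in M_*$ as $\varphi(\,\cdot\,) = \tau(\,\cdot\,y)$ with $y \in L^1(M,\tau)$, approximate $y$ in $L^1$-norm by $y_0 \in M$ (dense), and split $\lvert\varphi(T_\lambda(a) - T(a))\rvert$ into a main term bounded by $\lVert T_\lambda(a) - T(a)\rVert_2\,\lVert y_0\rVert_2$ and a tail bounded by $2\lVert a\rVert\,\lVert y - y_0\rVert_1$, exploiting that automorphisms are operator-norm isometries.

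The main step, which I expect to be the principal obstacle, is showing that $p$-convergence implies $u$-convergence. The key is that trace-preservation allows one to write, for $\varphi(\,\cdot\,) = \tau(\,\cdot\,y)$ with $y \in L^1(M,\tau)$,
\begin{equation*}
(\varphi \circ T_\lambda)(a) = \tau\bigl(T_\lambda(a)\,y\bigr) = \tau\bigl(a\,T_\lambda^{-1}(y)\bigr),
\end{equation*}
so the $L^1$-$M_*$ isometry yields $\lVert\varphi \circ T_\lambda - \varphi \circ T\rVert_{M_*} = \lVert T_\lambda^{-1}(y) - T^{-1}(y)\rVert_1$. For $y = T(b)$ with $b \in M$, applying the $L^1$-isometry of $T_\lambda^{-1}$ converts this into $\lVert T_\lambda(b) - T(b)\rVert_1 \leq \lVert T_\lambda(b) - T(b)\rVert_2$, which tends to zero by the previous step. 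The case of general $y \in L^1(M,\tau)$ then follows by a standard $\varepsilon/3$ argument using density of $M$ in $L^1$ and the $L^1$-isometry of both $T_\lambda^{-1}$ and $T^{-1}$. The real hurdle is precisely this final passage from a purely pointwise $2$-norm statement in $M$ to a uniform estimate in the predual norm, which works only because uniqueness of the trace grants us both the $L^p$-extension of automorphisms and the $L^1$-duality with $M_*$.
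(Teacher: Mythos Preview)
Your argument is correct and gives a genuinely self-contained proof, whereas the paper's proof proceeds essentially by citation: it invokes Haagerup's result \cite[Corollary~3.8]{Ha75} that the $p$- and $u$-topologies agree on $\Aut{M}$ for any von Neumann algebra in standard form, notes that the $2$-norm and strong topologies agree on bounded subsets of $M$ \cite[Proposition~III.2.2.17]{Bl06}, and then cites Winsl{\o}w \cite{Wi98} for the coincidence of the pointwise $\sigma$-strong$^*$ topology with the $p$-topology on $\Aut{M}$. Your approach instead exploits directly the tracial structure of a II$_1$ factor---the isometric identification $M_* \cong L^1(M,\tau)$ and the fact that trace-preserving automorphisms extend to isometries of each $L^p(M,\tau)$---to compare all three topologies by hand. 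The paper's route has the advantage of situating the result in a more general framework (Haagerup's theorem needs no trace), while yours makes transparent exactly where the II$_1$ hypothesis enters and avoids any black-box references; in particular, your reduction of $u$-convergence to an $L^1$-estimate via the duality $\varphi \circ T_\lambda \leftrightarrow T_\lambda^{-1}(y)$, followed by the $\epsilon/3$ density argument, is a clean and elementary substitute for the general Haagerup argument in this finite setting.
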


\begin{proof} \label{pf:top_on_Aut}
It was proven in \cite[Corollary 3.8]{Ha75} that the $p$- and the $u$-topology coincide on $\Aut{M}$. By \cite[Proposition III.2.2.17]{Bl06} the $2$-norm topology on $M$ agrees with the strong topology on bounded subsets of $M$. Since an automorphism maps bounded subsets of $M$ to bounded subsets, the pointwise $2$-norm topology agrees with the pointwise strong topology on $\Aut{M}$ generated by the seminorms $\alpha \mapsto \lVert \alpha(x)\,\xi \rVert$ for all $x \in M$ and $\xi \in L^2(M,\tau)$. Since $\Aut{M}$ maps the unitary group $\mathcal{U}(M)$ into itself and the latter spans $M$, it suffices to consider $x \in \mathcal{U}(M)$. But this implies that the pointwise strong topology agrees with the pointwise $\sigma$-strong$^*$-topology, which in turn agrees with the $p$-topology on $\Aut{M}$ as stated in \cite[Section 1.4]{Wi98}.
\end{proof}

\begin{lem} \label{cor:transition_maps}
Let $U$ be a topological space and let $M$ be a II$_1$ factor. Consider $M$ to be equipped with the $2$-norm topology. Then there is a bijection between the continuous maps $\varphi \colon U \times M \to M$, such that $a \mapsto \varphi(x, a)$ is an automorphism for all $x \in U$ and the continuous maps $\hat{\varphi} \colon U \to \Aut{M}$, where $\Aut{M}$ is equipped with the $u$-topology. It is defined by $\hat{\varphi}(x) = \varphi(x, \,\cdot\,)$
\end{lem}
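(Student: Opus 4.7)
The plan is to use Lemma~\ref{lem:top_on_Aut} to replace the $u$-topology on $\Aut{M}$ by the pointwise $2$-norm topology, which makes the correspondence essentially an exponential-law statement. Given $\varphi$ with each $\varphi(x, \cdot\,)$ an automorphism, the candidate $\hat{\varphi}(x) = \varphi(x, \,\cdot\,)$ is well-defined into $\Aut{M}$; conversely, given $\hat{\varphi} \colon U \to \Aut{M}$, set $\varphi(x, a) = \hat{\varphi}(x)(a)$. These assignments are obviously mutually inverse on the underlying sets of maps, so the only real content is the equivalence of the two continuity conditions.

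For the forward direction, if $\varphi \colon U \times M \to M$ is continuous, then for each fixed $a \in M$ the map $x \mapsto \varphi(x, a)$ arises by restricting $\varphi$ to $U \times \{a\}$ and is hence continuous into $(M, \lVert\cdot\rVert_{2,\tau})$. This says precisely that $\hat{\varphi}$ is continuous into $\Aut{M}$ equipped with the pointwise $2$-norm topology, which by Lemma~\ref{lem:top_on_Aut} is the $u$-topology.

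The reverse direction is where the factor assumption enters. Because $M$ is a II$_1$ factor, its normal faithful tracial state is unique, so every $\alpha \in \Aut{M}$ satisfies $\tau \circ \alpha = \tau$ and is therefore a $\lVert\cdot\rVert_{2,\tau}$-isometry. Fix $(x_0, a_0) \in U \times M$ and $\epsilon > 0$. Using the isometry property we split
\begin{equation*}
\lVert \varphi(x,a) - \varphi(x_0,a_0)\rVert_{2,\tau} \leq \lVert \varphi(x,a) - \varphi(x,a_0)\rVert_{2,\tau} + \lVert \varphi(x,a_0) - \varphi(x_0,a_0)\rVert_{2,\tau} = \lVert a - a_0\rVert_{2,\tau} + \lVert \varphi(x,a_0) - \varphi(x_0,a_0)\rVert_{2,\tau}.
\end{equation*}
Pick a $\lVert\cdot\rVert_{2,\tau}$-neighbourhood $V$ of $a_0$ on which the first summand is less than $\epsilon/2$, and, using pointwise $2$-norm continuity of $\hat{\varphi}$ at $x_0$ applied to $a_0$, pick a neighbourhood $W$ of $x_0$ on which the second summand is less than $\epsilon/2$. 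Then $\varphi$ maps $W \times V$ into the $\epsilon$-ball around $\varphi(x_0, a_0)$, proving joint continuity.

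The only conceptual obstacle is the backward direction; everything else is formal. The step that requires II$_1$-factoriality is precisely the observation that automorphisms are $\lVert\cdot\rVert_{2,\tau}$-isometric, which is what allows pointwise continuity plus a uniform Lipschitz estimate in the second variable to upgrade to joint continuity.
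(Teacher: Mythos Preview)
Your proof is correct and follows essentially the same approach as the paper's: both reduce via Lemma~\ref{lem:top_on_Aut} to the pointwise $2$-norm topology, observe that the set-level bijection is obvious, and for the nontrivial implication ($\hat{\varphi}$ continuous $\Rightarrow$ $\varphi$ continuous) use the same triangle-inequality split together with the fact that automorphisms of a II$_1$ factor are $\lVert\cdot\rVert_{2,\tau}$-isometries. The only cosmetic difference is that the paper phrases the argument with nets while you use $\epsilon$-neighbourhoods.
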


\begin{proof} \label{pf:transition_maps}
It is clear that the construction yields a bijection between the underlying sets; the only issue to check is continuity. By Lemma \ref{lem:top_on_Aut} the $u$-topology agrees with the pointwise $2$-norm topology. Suppose first that $\hat{\varphi}$ is continuous, i.e.\ $\hat{\varphi}(x_n)$ converges to $\hat{\varphi}(x)$ pointwise in $2$-norm for every net $(x_n)$ in $U$ that converges to $x \in U$. Let $(a_m)$ be a net in $M$ converging to $a \in M$ in $2$-norm. We have
\begin{align*}
	\lVert \varphi(x_n, a_m) - \varphi(x,a) \rVert_2 & \leq  \lVert \varphi(x_n, a_m - a) \rVert_2 + \lVert \hat{\varphi}(x_n)(a) - \hat{\varphi}(x)(a) \rVert_2 \\
	& \leq  \lVert a_m - a \rVert_2 + \lVert \hat{\varphi}(x_n)(a) - \hat{\varphi}(x)(a) \rVert_2
\end{align*}
where we used that an automorphism preserves the trace and is therefore isometric for the $2$-norm. This proves that $\varphi$ is continuous. Now suppose that $\varphi$ is continuous, then we have that $\lVert \hat{\varphi}(x_n)(a) - \hat{\varphi}(x)(a) \rVert_2 = \lVert \varphi(x_n,a) - \varphi(x,a) \rVert_2$ converges to zero for all $a \in M$.
\end{proof}

We will now construct the principal $\Aut{M}$-bundle $P_B \to X$ associated to a locally trivial topological bundle $(B,p)$ over the Hausdorff space $X$. Since we do not assume that the reader is familiar with the notion of principal $G$-bundles for a topological group $G$, we highlight the main points below. A good reference for this material is \cite[Chapter 4, Sections 2 and 3]{Hus94}.

\begin{dfn}
	Let $X$ be a topological space and let $G$ be a topological group. A (right) $G$-space $P$ together with a continuous $G$-map $q \colon P \to X$ (where $G$ acts trivially on $X$) is called a \emph{principal $G$-bundle}, if every point $x \in X$ has a neighbourhood $U \ni x$, such that there exists a $G$-equivariant homeomorphism $\phi_U \colon q^{-1}(U) \to U \times G$ with ${\rm pr}_U \circ \phi_U = \left.q\right|_{q^{-1}(U)}$. 
\end{dfn}

Let $(B,p)$ be a locally trivial topological bundle of tracial von Neumann algebras with fibre $M$. For any $x \in X$ there is an open neighbourhood $U \ni x$ and  homeomorphisms $\varphi$ and $\psi$ such that the diagram
\begin{equation}
\xymatrix
{
	p^{-1}(U) \ar[r]^{\varphi} \ar[d]_{p} & U \times M \ar[d]_{\pi_1} \\
	U \ar[r]^{\psi} & U \\
}	
\end{equation}
commutes, where $\pi_1: U \times M \rightarrow U$ is the projection onto the first coordinate. By replacing $\varphi$ with $(\psi^{-1} \times \mathrm{id}_M) \circ \varphi$, we get a commuting diagram of the form
\begin{equation}\label{TrivialisingNhoods}
\xymatrix
{
	p^{-1}(U) \ar[r]^{\cong} \ar[d]_{p} & U \times M \ar[d]_{\pi_1} \\
	U \ar[r]^{\mathrm{id}_U} & U. \\
}	
\end{equation}
We call such a $U$ a trivialising neighbourhood for $(B,p)$.

Consider $\Aut{M}$ as a topological group equipped with the $u$-topology. The principal $\Aut{M}$-bundle $P_B$ is obtained by replacing the fibre $M$ of $B$ by the group $\Aut{M}$ while preserving the transition maps. Write $B_x = p^{-1}(x)$ for the fibre at $x$ and $\Iso{M_1}{M_2}$ for the set of isomorphisms between two von Neumann algebras. As a set we define
\[
	P_B = \coprod_{x \in X} \Iso{M}{B_x}.
\]
Denote the canonical quotient map $P_B \to X$ by $q$. A local trivialisation $\varphi_U \colon U \times M \to p^{-1}(U)$ induces a bijection 
\[
	\psi_{U} \colon U \times \Aut{M} \to q^{-1}(U) = \left.P_B\right|_U = \coprod_{x \in U}\Iso{M}{B_x}.
\] 
Let $V \subseteq X$ be another subset with $U \cap V \neq \emptyset$ and such that there is a local trivialisation $\varphi_V \colon V \times M \to p^{-1}(V)$. Note that 
\[
	\left.\varphi_V^{-1} \circ \varphi_U\right|_{(U \cap V) \times M} \colon (U \cap V) \times M \to (U \cap V) \times M
\]
is of the form $(x,a) \mapsto (x,\varphi_{UV}(x, a))$ for a continuous map $\varphi_{UV} \colon (U \cap V) \times M \to M$ and $\varphi_{VU}^{-1}(x,a) = \varphi_{UV}(x,a)$. We have 
\[
	\left.\psi_V^{-1} \circ \psi_U\right|_{(U \cap V) \times \Aut{M}}(x, \alpha) = (x, \hat{\varphi}_{UV}(x) \circ \alpha).
\]
By Lemma \ref{cor:transition_maps} and the continuity of composition these maps are homeomorphisms. 

Now equip $P_B$ with the following topology: Cover $X$ by trivialising neighbourhoods $(U_i)_{i \in I}$ for $B$. A set $V \subseteq P_B$ is open if and only if for every point $y \in V$ there exists an $i \in I$ and a subset $V' \subseteq V \cap q^{-1}(U_i)$ such that $y \in V'$ and  $\psi_{U_i}^{-1}(V') \subseteq U_i \times \Aut{M}$ is an open neighbourhood of $\psi_{U_i}^{-1}(y)$. Since the transition maps $\psi_{U_j}^{-1} \circ \psi_{U_i} \colon (U_i \cap U_j) \times \Aut{M} \to (U_i \cap U_j) \times \Aut{M}$ are homeomorphisms, this definition is consistent. With this topology all maps $\psi_{U_i} \colon U_i \times \Aut{M} \to q^{-1}(U_i)$ become homeomorphisms. It is straightforward to check that this topology does not depend on the choice of trivialising cover and that $q \colon P_B \to X$ is a principal $\Aut{M}$-bundle.

Let $q \colon P \to X$ be a principal $\Aut{M}$-bundle and let $B = (P \times M)/\!\sim$ be the quotient with respect to the equivalence relation $(p \cdot \alpha,a) \sim (p, \alpha(a))$ for $\alpha \in \Aut{M}$. Let $\pi \colon B \to X$ given by $\pi([p,\alpha]) = q(p)$ be the quotient map. Observe that each point $([p_1,a_1], [p_2,a_2]) \in B \times_{\pi} B$ is equivalent to one of the form $([p_1,a_1], [p_1,a_2'])$ for a $a_2' \in M$ fixed by $p_1$, which allows us to define an addition and a multiplication via $[p_1, a_1 + a_2']$ and $[p_1, a_1 \cdot a_2']$ respectively. Since automorphisms preserve the norm and the trace, we obtain well-defined maps $\lVert\,\cdot\,\rVert \colon B \to [0,\infty)$ and $\tau \colon B \to \C$. 

Each point $x \in X$ has a neighbourhood $U$, such that the restriction $\left.P\right|_U$ is isomorphic to the trivial bundle $U \times \Aut{M}$. It follows that there is a homeomorphism $\left.B\right|_U \to U \times M$ compatible with the projection maps to $U$. Since the continuity conditions (i)--(viii) from Definition \ref{dfn:TopologicalBundle} can all be checked locally and are true for the trivial bundle, they hold for $B$ as well. It follows that $B$ is in fact a topological bundle of tracial von Neumann algebras. It is called the associated topological bundle. 

We shall show that these two constructions are inverse to one another. We need the following well-known fact about principal bundles: 
\begin{lem} \label{lem: sections}
	Let $X$ be a topological space and let $G$ be a topological group. Let $q \colon P \to X$ be a principal $G$-bundle. Suppose there exists a continuous section $\sigma \colon X \rightarrow P$. Then $P$ is isomorphic to the trivial principal $G$-bundle $X \times G$.
\end{lem}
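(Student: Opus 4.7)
The plan is to use the section $\sigma$ to define an explicit trivialisation of $P$, then verify that it is a $G$-equivariant homeomorphism using the local triviality assumption on $P$.

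First I would define the map
\[
	\Phi \colon X \times G \to P, \qquad \Phi(x,g) = \sigma(x) \cdot g.
\]
This map is continuous because it is the composition of $\sigma \times \mathrm{id}_G$ with the right action $P \times G \to P$, both of which are continuous. It is $G$-equivariant by definition, and it satisfies $q \circ \Phi = {\rm pr}_X$, so it is a morphism of principal $G$-bundles over $X$.

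Next I would verify bijectivity. Since $q \colon P \to X$ is a principal $G$-bundle, each fibre $q^{-1}(x)$ is a $G$-torsor: the map $g \mapsto \sigma(x) \cdot g$ is a bijection onto $q^{-1}(x)$. As $\Phi$ preserves fibres and restricts on each fibre to this bijection, $\Phi$ itself is a bijection.

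The main point is to show that $\Phi^{-1}$ is continuous, and this is where the local triviality of the principal bundle is used. Fix $x_0 \in X$ and choose a trivialising neighbourhood $U \ni x_0$ together with a $G$-equivariant homeomorphism $\phi_U \colon q^{-1}(U) \to U \times G$ commuting with the projections to $U$. Then $\phi_U \circ \sigma|_U \colon U \to U \times G$ has the form $x \mapsto (x, s(x))$ for some continuous map $s \colon U \to G$. The composition $\phi_U \circ \Phi|_{U \times G} \colon U \times G \to U \times G$ therefore takes the form
\[
	(x,g) \mapsto (x, s(x)\cdot g),
\]
whose inverse $(x,h) \mapsto (x, s(x)^{-1} h)$ is continuous because multiplication and inversion in $G$ are continuous and $s$ is continuous. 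Hence $\Phi|_{U \times G}$ is a homeomorphism onto $q^{-1}(U)$. Since continuity of $\Phi^{-1}$ is a local property on $P$ (and $X$ is covered by such $U$), $\Phi^{-1}$ is continuous on all of $P$, and $\Phi$ is the desired isomorphism of principal $G$-bundles.

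I do not anticipate any real obstacle: the only non-formal step is the continuity of $\Phi^{-1}$, and the local trivialisations reduce this to the continuity of inversion and multiplication in $G$, which hold by hypothesis.
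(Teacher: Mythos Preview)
Your proof is correct and follows essentially the same approach as the paper: both define the trivialisation $\Phi(x,g)=\sigma(x)\cdot g$ and use local triviality to establish continuity of the inverse. The only cosmetic difference is that the paper packages the inverse via the translation map $\kappa\colon P\times_q P\to G$, $(p_1,p_2)\mapsto g_{12}$ with $p_1 g_{12}=p_2$, whereas you verify continuity of $\Phi^{-1}$ directly in local charts; the underlying content is identical.
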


\begin{proof}
	The trivialisation of $P$ is given by $\psi \colon X \times G \to P$ with $\psi(x,g) = \sigma(x)g$, which is clearly $G$-equivariant. To construct an inverse, let $P \times_q P = \{ (p_1, p_2) \in P \times P\ |\ q(p_1) = q(p_2) \} \subseteq P \times P$ and note that the map 
\[
	\kappa \colon P \times_q P \to G \quad ; \quad (p_1,p_2) \mapsto g_{12} \text{ with } p_1g_{12} = p_2 
\]
is well-defined and continuous, which can be checked using the local triviality of $P$. The inverse of $\psi$ is defined by 
\[
	\phi \colon P \to X \times G \quad ; \quad p \mapsto (q(p), \kappa(\sigma(q(p)), p)). \qedhere 
\]
\end{proof}

\begin{rem} \label{rem:hom_is_iso}
	In a similar fashion one can show that any continuous $G$-equivariant map $\varphi \colon P \to P'$ between principal bundles $q \colon P \to X$ and $q' \colon P' \to X$ such that $q' \circ \varphi = q$ is in fact an isomorphism. Such a map is said to cover the identity on $X$.
\end{rem}
\begin{prop} \label{lem:principal_vs_B}
Let $M$ be a II$_1$ factor and let $X$ be a Hausdorff space. The associated bundle construction yields a bijection between isomorphism classes of locally trivial topological bundles of tracial von Neumann algebras with fibre $M$ over $X$ and isomorphism classes of principal $\Aut{M}$-bundles over $X$.
\end{prop}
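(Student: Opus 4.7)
The plan is to show that the two constructions are mutually inverse on isomorphism classes by producing natural isomorphisms $B \cong B_{P_B}$ and $P \cong P_{B_P}$, where I write $B_P = (P \times M)/\!\sim$ for the associated bundle of a principal $\Aut{M}$-bundle $P$. First I would check that each construction is functorial with respect to isomorphism: given an isomorphism $\varphi \colon B \to B'$ of topological bundles of tracial von Neumann algebras covering $\mathrm{id}_X$, the fibrewise map $\Iso{M}{B_x} \to \Iso{M}{B'_x}$, $\alpha \mapsto \varphi|_{B_x} \circ \alpha$, assembles into an $\Aut{M}$-equivariant map $P_B \to P_{B'}$ covering $\mathrm{id}_X$, which is continuous because on a trivialising neighbourhood it corresponds to a transition map of the form appearing in Lemma~\ref{cor:transition_maps}; by Remark~\ref{rem:hom_is_iso} this is an isomorphism of principal bundles. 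The analogous argument applies to $P \mapsto B_P$ via $[p,a] \mapsto [\psi(p),a]$.

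For the first natural isomorphism, I would define $\Psi \colon B_{P_B} \to B$ on representatives by $[\alpha, a] \mapsto \alpha(a)$ for $\alpha \in \Iso{M}{B_x}$. This is well-defined on equivalence classes because $[\alpha \circ \beta, a]$ and $[\alpha, \beta(a)]$ both map to $\alpha(\beta(a))$, it commutes with the projections to $X$, and it restricts to an isomorphism of tracial von Neumann algebras on each fibre. To see that it is a homeomorphism I would work in a trivialising neighbourhood $U$ for $B$: the local trivialisation $\varphi_U \colon U \times M \to p^{-1}(U)$ induces the chosen trivialisation $\psi_U$ of $P_B|_U$, and unravelling definitions shows that $\Psi|_{B_{P_B}|_U}$ is identified with $\varphi_U$ composed with the canonical homeomorphism $(U \times \Aut{M}) \times_{\Aut{M}} M \to U \times M$.

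For the second natural isomorphism, I would define $\Phi \colon P \to P_{B_P}$ by $p \mapsto \phi_p$, where $\phi_p \colon M \to (B_P)_{q(p)}$ is $\phi_p(a) = [p, a]$. The equivariance $\phi_{p \cdot \alpha} = \phi_p \circ \alpha$ is immediate from the defining relation $(p \cdot \alpha, a) \sim (p, \alpha(a))$, and $\phi_p$ is an isomorphism of tracial von Neumann algebras in each fibre (this uses the local triviality of $P$ to identify $(B_P)_{q(p)}$ with $M$). Continuity then reduces, via a local trivialisation $P|_U \cong U \times \Aut{M}$, to showing that the map $U \times \Aut{M} \to P_{B_P}|_U$ obtained from $\Phi$ is continuous; using Lemma~\ref{cor:transition_maps} this in turn reduces to the continuity of the tautological map $(U \times \Aut{M}) \times M \to U \times M$, $(x, \alpha, a) \mapsto (x, \alpha(a))$. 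Once continuity is established, Remark~\ref{rem:hom_is_iso} promotes $\Phi$ to an isomorphism of principal bundles.

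The main obstacle will be the continuity verifications, specifically checking that the quotient topology on $B_{P_B}$ matches the topology already in place on $B$ (and similarly that $\Phi$ is continuous into the $u$-topology on $\Aut{M}$). The right move is to avoid a direct net-by-net argument and instead reduce both checks to the corresponding statements on trivialising neighbourhoods, where everything becomes the standard identification $(U \times \Aut{M}) \times_{\Aut{M}} M \cong U \times M$; the technical heart of this reduction is precisely Lemma~\ref{cor:transition_maps}, which lets one pass freely between continuous families of automorphisms and continuous maps into $\Aut{M}$. Everything else—well-definedness, equivariance, preservation of the tracial von Neumann algebra structure—is formal once the topologies are pinned down.
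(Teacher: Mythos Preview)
Your proposal is correct and follows essentially the same route as the paper: the maps $[\alpha,a]\mapsto\alpha(a)$ and $p\mapsto(a\mapsto[p,a])$ are exactly the ones used there, continuity is handled by reduction to local trivialisations (with the identification $(U\times\Aut{M})\times_{\Aut{M}}M\cong U\times M$), and Remark~\ref{rem:hom_is_iso} is invoked for the principal-bundle direction just as you suggest. Your explicit check of functoriality with respect to isomorphisms is a reasonable addition that the paper leaves implicit, but otherwise the arguments coincide.
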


\begin{proof} 
Let $(B,p)$ be a locally trivial topological bundle of tracial von Neumann algebras and denote by $P_B$ the corresponding principal $\Aut{M}$-bundle. We need to check that the topological bundle associated to $P_B$ agrees with $B$. Consider the map 
\(
	(P_B \times M) / \!\sim\ \to B
\)
given by $[r,a] \mapsto r(a)$, where $r \in \Iso{M}{B_{q(r)}}$ and $a \in M$. To see that this is a homeomorphism, it suffices to check that it is a bijective local homeomorphism. It is straightforward to see that it is bijective. Any choice of local trivialisation of $B$, over $U \subseteq X$ say, induces a corresponding trivialisation of $P_B$ and we have 
\[
	\xymatrix{
		\left.(P_B \times M)/\!\sim\right|_U \ar[d]_-{\cong} \ar[r] & \left.B\right|_U \ar[d]^-{\cong} \\
		U \times (\Aut{M} \times M)/\!\sim \ar[r]_-{\cong} & U \times M
	}
\]
where the inverse of the lower horizontal map is given by $(x,a) \mapsto (x,[{\rm id}_M,a])$. Observe that both horizontal maps restrict to isomorphisms of tracial von Neumann algebras in each fibre. Hence they are isomorphisms of topological bundles in the sense of Definition~\ref{dfn:TopologicalBundle}.

Let $P$ be a principal $\Aut{M}$-bundle. We have to check that the principal $\Aut{M}$-bundle $P_B$ obtained from $B = (P \times M)/\!\sim$ agrees with $P$. By Remark \ref{rem:hom_is_iso} it suffices to construct a continuous $\Aut{M}$-equivariant map $P \to P_B$ covering the identity on $X$. This is defined by sending $r \in P$ to the isomorphism $\Iso{M}{B_{q(r)}}$ that maps $a$ to $[r,a] \in B$. Continuity is again easy to check in local trivialisations.
\end{proof}
Algebraic topology and sheaf theory provide tools for classifying principal $G$-bundles (see for example \cite[Chapter 4, Section 12]{Hus94}). For our purpose, we need only the following theorem.

\begin{thm} \label{cor:contractible_group}
	Let $X$ be a paracompact Hausdorff space and let $G$ be a contractible topological group. Let $q \colon P \to X$ be a principal $G$-bundle. Then $P$ is trivialisable. 
\end{thm}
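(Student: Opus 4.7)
The plan is to reduce the theorem, via Lemma \ref{lem: sections}, to the existence of a continuous global section $\sigma \colon X \to P$; once such a section is produced, that lemma immediately yields an isomorphism $P \cong X \times G$.

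To construct the section, I would invoke Milnor's construction of the universal principal $G$-bundle $EG \to BG$ together with the classification theorem: for paracompact Hausdorff $X$, isomorphism classes of principal $G$-bundles over $X$ are in natural bijection with homotopy classes $[X, BG]$, and every such bundle arises as a pullback $P \cong f^{*}EG$ of some classifying map $f \colon X \to BG$ (see \cite[Chapter 4, Sections 11 and 12]{Hus94}). The total space $EG$ is contractible by construction.

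Since $G$ is contractible, the long exact sequence of homotopy groups associated to the fibration $G \to EG \to BG$ forces $\pi_n(BG) = 0$ for all $n \geq 0$. Milnor's $BG$ has the homotopy type of a CW complex, so Whitehead's theorem upgrades this weak contractibility to genuine contractibility. Consequently, any classifying map $f$ is null-homotopic, $f \simeq c$ for a constant map $c \colon X \to BG$. By the homotopy invariance of pullbacks of numerable principal bundles (applicable because paracompactness of $X$ makes every locally trivial bundle on $X$ numerable), we obtain $P \cong f^{*}EG \cong c^{*}EG \cong X \times G$. This yields the section $x \mapsto (x, e)$, and Lemma \ref{lem: sections} finishes the proof.

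The essential obstacle is not the argument itself but verifying that the classifying space machinery applies with no hypotheses on $X$ beyond paracompactness. Paracompactness is precisely what is needed here: it guarantees the existence of partitions of unity subordinate to any trivialising cover, hence numerability of $P$, and thus the validity of both the classification theorem and the homotopy invariance of pullbacks at this level of generality.
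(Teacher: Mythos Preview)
Your overall plan---reduce to the existence of a global section and invoke Lemma~\ref{lem: sections}---matches the paper's. The paper, however, obtains the section in one line by citing \cite[Lemma 4]{Di63}, which directly constructs a continuous section of any locally trivial bundle with contractible fibre over a paracompact base using a partition-of-unity argument. Your detour through classifying spaces is a genuinely different route, and it runs into a real obstacle.

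The problematic step is the sentence ``Milnor's $BG$ has the homotopy type of a CW complex, so Whitehead's theorem upgrades this weak contractibility to genuine contractibility.'' For an \emph{arbitrary} topological group $G$, Milnor's join model for $BG$ need not have the homotopy type of a CW complex; this typically requires additional hypotheses on $G$ (for instance that $G$ itself be a CW complex, or at least well-pointed), none of which are assumed in the theorem. Without CW homotopy type you only know that $BG$ is weakly contractible, and weak contractibility of the target is not enough to force every map from a merely paracompact $X$ to be null-homotopic---Whitehead's theorem needs a CW structure on one side or the other, and $X$ is not assumed CW either. So the argument, as written, does not establish that the classifying map is null-homotopic.

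There is no easy rescue within the classifying-space framework at this level of generality: to show $EG \to BG$ itself has a section you would need $BG$ paracompact (not given), and replacing Milnor's model by a CW approximation forfeits the numerability of the universal bundle that makes the classification theorem work for arbitrary paracompact $X$. The paper's approach sidesteps all of this: \cite[Lemma 4]{Di63} works directly with the given bundle $P \to X$, using only the contractibility of the fibre $G$ and a partition of unity on $X$, and never mentions $BG$. (Incidentally, your final two sentences are redundant: once you have $P \cong X \times G$ there is nothing left to prove; you do not need to feed the resulting section back into Lemma~\ref{lem: sections}.)
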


\begin{proof}
	The assumptions about $P$, $X$ and $G$ imply that $q \colon P \to X$ has a global section by \cite[Lemma 4]{Di63}. Now apply Lemma \ref{lem: sections}. 
\end{proof}

\begin{cor} \label{thm:local_global} 
	Let $\M$ be a locally trivial W$^*$-bundle with all fibres isomorphic to the II$_1$ factor $M$. Assume $\Aut{M}$ is contractible with respect to the $u$-topology. Then $\M$ is trivial.
\end{cor}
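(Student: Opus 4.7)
The plan is to reduce the statement to Theorem~\ref{cor:contractible_group} by passing through the two bijections established in the previous sections: the correspondence between W$^*$-bundles and topological bundles of tracial von Neumann algebras (Theorems~\ref{Taka11} and \ref{TheOtherOne}, together with Propositions~\ref{Iso1} and \ref{Iso2}), and the associated bundle correspondence between locally trivial topological bundles with fibre $M$ and principal $\Aut{M}$-bundles (Proposition~\ref{lem:principal_vs_B}).

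Concretely, let $(B,p)$ be the topological bundle of tracial von Neumann algebras associated to $\M$ by Proposition~\ref{TopologyDetails}. The first step is to verify that $(B,p)$ is locally trivial with fibre $M$ in the sense of Definition~\ref{dfn:TopBundleLocTrivial}. Given $x \in X$, pick a closed neighbourhood $Y$ of $x$ with $\M_Y \cong C_{\sigma}(Y,M)$ as W$^*$-bundles. By Proposition~\ref{Iso1} the topological bundle associated to $\M_Y$ is isomorphic to the trivial bundle $(Y \times M, \pi_1)$, while Proposition~\ref{Restr}(a) identifies this associated bundle with $(p^{-1}(Y), p|_{p^{-1}(Y)})$. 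Combining these isomorphisms trivialises $B$ over $Y$, and since $X$ is compact Hausdorff the remark after Definition~\ref{dfn:TopBundleLocTrivial} allows us to use closed neighbourhoods for local triviality.

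Once $(B,p)$ is known to be locally trivial with fibre $M$, Proposition~\ref{lem:principal_vs_B} produces a principal $\Aut{M}$-bundle $q \colon P_B \to X$ whose associated topological bundle recovers $(B,p)$. Since $X$ is compact Hausdorff, it is paracompact, and $\Aut{M}$ is contractible in the $u$-topology by hypothesis, so Theorem~\ref{cor:contractible_group} yields an isomorphism $P_B \cong X \times \Aut{M}$ of principal $\Aut{M}$-bundles. Applying the associated bundle construction to this trivialisation, and using Proposition~\ref{lem:principal_vs_B} once more, gives an isomorphism $(B,p) \cong (X \times M, \pi_1)$ of topological bundles of tracial von Neumann algebras. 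Finally, Proposition~\ref{Iso2} translates this back to an isomorphism of W$^*$-bundles $\M \cong C_{\sigma}(X,M)$, since $C_{\sigma}(X,M)$ is exactly the W$^*$-bundle of bounded continuous sections of $(X \times M, \pi_1)$ by Theorem~\ref{Taka11}(c).

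The conceptual content is entirely packaged into Theorem~\ref{cor:contractible_group}; the only genuine work in this proof is the bookkeeping step that local triviality in the W$^*$-bundle sense implies local triviality in the topological bundle sense, which is where Proposition~\ref{Restr}(a) does the heavy lifting. Everything else is an application of the equivalences already proved.
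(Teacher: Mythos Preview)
Your proposal is correct and follows exactly the same route as the paper's proof, which simply says that by the results of Section~3 and Proposition~\ref{lem:principal_vs_B} it suffices to show the associated principal $\Aut{M}$-bundle is trivial, and then invokes Theorem~\ref{cor:contractible_group}. You have unpacked the phrase ``by the results of Section~3'' into the explicit chain (Proposition~\ref{Restr}(a) plus Proposition~\ref{Iso1} for local triviality of $(B,p)$, then Propositions~\ref{Iso2} and Theorem~\ref{Taka11} to go back), which is exactly the bookkeeping the paper leaves implicit.
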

\begin{proof} 
By the results of Section 3 and Proposition \ref{lem:principal_vs_B}, it suffices to show the corresponding principal $\Aut{M}$-bundle is trivial. This follows from Theorem \ref{cor:contractible_group}, since by assumption $\Aut{M}$ is contractible.
\end{proof}
Corollary \ref{thm:local_global} together with Popa and Takesaki's result  that $\Aut{\RR}$ is contractible in the $u$-topology \cite[Theorem 4]{Po93} gives our main theorem.
\begin{thm} \label{thm:hyperfinite_trivial}
	A locally trivial W$^*$-bundle with all fibres isomorphic to the hyperfinite II$_1$ factor $\RR$ is trivial.
\end{thm}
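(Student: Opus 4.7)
The plan is to invoke Corollary~\ref{thm:local_global} with $M = \RR$; the theorem then reduces entirely to the contractibility of the automorphism group $\Aut{\RR}$ with respect to the $u$-topology, which is the content of Popa and Takesaki's result \cite[Theorem 4]{Po93}. All of the conceptual work has been done in the preceding sections, so there is essentially nothing left to prove beyond citing these two inputs.

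For transparency, I would briefly trace how Corollary~\ref{thm:local_global} unpacks in this specific setting. Starting from a locally trivial W$^*$-bundle $\M$ over the compact Hausdorff space $X$ with all fibres isomorphic to $\RR$, one first passes to the associated topological bundle $(B,p)$ of tracial von Neumann algebras constructed in Proposition~\ref{TopologyDetails}. Local triviality of $\M$ transfers to local triviality of $(B,p)$ in the sense of Definition~\ref{dfn:TopBundleLocTrivial} via Proposition~\ref{Restr}(a) (applied on each closed neighbourhood witnessing local triviality of $\M$), together with the identification of trivial topological bundles and trivial W$^*$-bundles noted just before Proposition~\ref{Restr}. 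By Proposition~\ref{lem:principal_vs_B}, $(B,p)$ corresponds to a principal $\Aut{\RR}$-bundle $P_B \to X$, where $\Aut{\RR}$ carries the $u$-topology.

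Since $X$ is compact Hausdorff, it is paracompact, so Theorem~\ref{cor:contractible_group} applies provided that $\Aut{\RR}$ is contractible in the $u$-topology. This is exactly the Popa--Takesaki theorem. Hence $P_B$ is trivialisable, so by Proposition~\ref{lem:principal_vs_B} the associated topological bundle $(B,p)$ is isomorphic to the trivial bundle $(X \times \RR, \pi_1)$, and therefore $\M$ is isomorphic, as a W$^*$-bundle, to $C_\sigma(X,\RR)$ via Theorem~\ref{Taka11}(c).

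The only potential obstacle is purely bookkeeping: making sure that the various notions of local triviality (for W$^*$-bundles, for topological bundles of tracial von Neumann algebras, and for principal $\Aut{\RR}$-bundles) line up compatibly. This compatibility is already accounted for by Proposition~\ref{Restr} and the remarks preceding it together with Proposition~\ref{lem:principal_vs_B}, so in the end the proof is a one-line appeal to Corollary~\ref{thm:local_global} with the contractibility input supplied by \cite[Theorem 4]{Po93}.
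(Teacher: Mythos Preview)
Your proposal is correct and matches the paper's approach exactly: the paper's proof is simply the one-line observation that Corollary~\ref{thm:local_global} applies with $M = \RR$ once one invokes Popa and Takesaki's contractibility result \cite[Theorem 4]{Po93}. Your additional unpacking of how the various correspondences fit together is accurate and more detailed than the paper's statement, but the argument is the same.
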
 

\section{Non-trivial, locally trivial bundles} \label{sec:non-trivial}
In this section we give examples of non-trivial, but still locally trivial W$^*$-bundles over the circle $S^1$. The construction is motivated by the following facts from the theory of principal bundles: For every topological group $H$ there exists a topological space $BH$, such that isomorphism classes of principal $H$-bundles over a paracompact topological space $X$ are in bijection with homotopy classes of maps $X \to BH$ (see \cite[Theorem 3.1]{Mil56} and \cite[Proposition 6]{Hus94}). In particular, Proposition~\ref{lem:principal_vs_B} implies that isomorphism classes of locally trivial topological bundles of tracial von Neumann algebras with fibre a II$_1$ factor $M$ over the circle $S^1$ are in bijection with $[S^1,B\Aut{M}]$. The set $[S^1, B\Aut{M}]$ is in bijection with the set of conjugacy classes in the group\footnote{Note that it is the group structure of $\Aut{M}$ which turns $\pi_0(\Aut{M})$ into a group.} of path-components $\pi_0(\Aut{M})$. Since $\pi_0(\Aut{M})$ surjects onto $\pi_0(\Out{M})$, it therefore suffices to find factors, for which $\Out{M}$ is not path-connected to obtain non-trivial examples. Factors $M$ of type II$_1$ with $\Out{M}$ isomorphic to a prescribed compact group have been constructed by Ioana, Peterson and Popa in \cite{IPP08} in the abelian case and by Vaes and Falgui\`{e}res in \cite{VF08} for general compact groups. 

We will use the construction from \cite{VF08}: Fix a non-trivial finite group $G$. As sketched at the end of \cite[Section 2]{VF08}, there exists a minimal action of $G$ on $\RR$. By \cite[Corollary 2.2]{VF08} the group $\Gamma = SL(3,\Z)$ acts on the fixed point algebra $\RR^G$. Let $M = (\RR^G \rtimes \Gamma) \ast_{\RR^G} R$. The natural map $G \to \Aut{M}$ induces an isomorphism $G \cong \Out{M} = \Aut{M}/\text{Inn}(M)$ by \cite[Corollary 2.2]{VF08}. 



The group $\Out{M}$ is defined as a quotient and could in principle be non-Hausdorff. However, since $M$ is full, $\Out{M}$ is Hausdorff. Therefore the continuous bijection $G \to \Out{M}$ induced by the action is a homeomorphism. Let $\theta \colon \Aut{M} \to \Out{M} \cong G$ be induced by the quotient map and the above identification. Fix $g \in G$ and let $\alpha \in \Aut{M}$ be an automorphism with $\theta(\alpha) = g$. This choice induces a group homomorphism $\Z \to \Aut{M}$, which will also be denoted by $\alpha$. Let
\begin{equation} \label{eqn:B}
	B =  \R \times_{\alpha} M
\end{equation}
that is, take the product $\R \times M$ modulo the equivalence relation $(t + n, m) \sim (t, \alpha(n)(m))$ for all $n \in \Z$. Together with the canonical quotient map $B \to S^1$, this is a topological bundle of tracial von Neumann algebras over $S^1$ in the sense of Definition \ref{dfn:TopologicalBundle} and is locally trivial in the sense of Definition \ref{dfn:TopBundleLocTrivial}. We can, therefore, via Theorem \ref{TheOtherOne}, define a locally trivial W$^*$-bundle $\M$ which induces $B$.

\begin{lem} 
Let $G$ be a finite group, let $M$ be the II$_1$ factor with $\Out{M} \cong G$ constructed above and let $\theta \colon \Aut{M} \to \Out{M} \cong G$ be the quotient map. Let $\alpha \in \Aut{M}$ with $g = \theta(\alpha) \neq e$. Then the W$^*$-bundle $\M$ associated to the topological bundle $B$ given by (\ref{eqn:B}) is non-trivial.
\end{lem}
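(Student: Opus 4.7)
The plan is to argue by contradiction. Suppose $\M \cong C_\sigma(S^1, M)$ as W$^*$-bundles. Then by Proposition~\ref{Iso1}, together with Theorems~\ref{Taka11} and~\ref{TheOtherOne}, the associated topological bundle $B$ of tracial von Neumann algebras is isomorphic to the trivial bundle $S^1 \times M$. Applying Proposition~\ref{lem:principal_vs_B}, the principal $\Aut{M}$-bundle $P_B$ is therefore trivial, and in particular admits a continuous global section $\sigma \colon S^1 \to P_B$.

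The central step is to identify $P_B$ concretely from the presentation $B = \R \times_\alpha M$. For each $t \in \R$, the map $\iota_t \colon M \to B_{[t]}$ given by $m \mapsto [t,m]$ is an isomorphism of tracial von Neumann algebras, and the definition of the equivalence relation forces $\iota_{t+1} = \iota_t \circ \alpha$. It follows that the fibrewise identifications $\Aut{M} \to \Iso{M}{B_{[t]}}$, $\beta \mapsto \iota_t \circ \beta$, exhibit $P_B$ as the quotient of $\R \times \Aut{M}$ by the $\Z$-action $(t,\beta) \sim (t+1, \alpha^{-1} \circ \beta)$. Pulling $\sigma$ back along the covering $\R \to S^1$ then yields a continuous map $\tilde\sigma \colon \R \to \Aut{M}$ satisfying the twisted periodicity condition $\tilde\sigma(t+1) = \alpha^{-1} \circ \tilde\sigma(t)$.

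The map $h \colon [0,1] \to \Aut{M}$ defined by $h(t) = \tilde\sigma(t) \circ \tilde\sigma(0)^{-1}$ is then a continuous path from $\mathrm{id}_M$ to $\alpha^{-1}$. Since inversion is a homeomorphism of $\Aut{M}$ fixing $\mathrm{id}_M$, both $\alpha^{-1}$ and $\alpha$ lie in the identity path-component. But the quotient map $\theta \colon \Aut{M} \to \Out{M} \cong G$ is continuous, and the finite group $G$ carries the discrete topology; hence $\theta$ is locally constant and maps the identity component into $\{e\}$. This forces $\theta(\alpha) = e$, contradicting $\theta(\alpha) = g \neq e$.

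The main obstacle is the explicit description of $P_B$ as the twisted quotient $(\R \times \Aut{M})/\Z$ with the correct sign of the twist, and in particular the bookkeeping required to transfer the section from $P_B$ to its pullback on $\R$ in a way that records the holonomy $\alpha^{-1}$. Once that is in hand, the argument is purely elementary, relying only on continuity of $\theta$ and discreteness of $G$ — the latter being a consequence of $M$ being a full II$_1$ factor, so that the continuous bijection $G \to \Out{M}$ is a homeomorphism.
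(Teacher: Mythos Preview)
Your argument is correct. Both proofs proceed by contradiction and reduce to showing that the principal $\Aut{M}$-bundle $P_B$ cannot be trivial, but the endgames differ slightly.

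The paper first passes to the associated principal $G$-bundle $Q = P_B \times_\theta G$ over $S^1$, observes that triviality of $P_B$ would force triviality of $Q$, and then invokes the classification of principal $G$-bundles over $S^1$ by conjugacy classes of $G$ (via covering space theory and path lifting) to identify the holonomy of $Q$ as the class of $g \neq e$. You instead work directly at the $\Aut{M}$-level: you describe $P_B$ explicitly as $(\R \times \Aut{M})/\Z$, lift a hypothetical global section through the covering $\R \to S^1$ to obtain a continuous path in $\Aut{M}$ from ${\rm id}_M$ to $\alpha^{-1}$, and only then project via $\theta$, using discreteness of $G \cong \Out{M}$ to derive the contradiction. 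The two arguments are really the same holonomy computation read in different orders---the paper pushes down to $G$ first and then lifts paths, while you lift paths first and push down to $G$ last. Your route is marginally more elementary in that it avoids the associated-bundle construction and the explicit invocation of the classification of $G$-covers of $S^1$; the paper's route makes the topological content (covering space holonomy) more visibly categorical.
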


\begin{proof}
Let $q \colon P \to S^1$ be the principal $\Aut{M}$-bundle of $B$. Suppose for the sake of contradiction that $\M$ is trivialisable. By the results of Section~3, $P$ is trivialisable. Consider $Q = P \times_{\theta} G$ defined as the quotient of the product $P \times G$ with respect to the equivalence relation $(p \cdot\beta, h) \sim (p, \theta(\beta)\cdot h)$ for $\beta \in \Aut{M}$. If $P$ is trivialisable, so is $Q$, but $Q \to S^1$ is a principal $G$-bundle over $S^1$ for the finite group $G$. By elementary covering space theory the isomorphism classes of these are in correspondence with the conjugacy classes of $G$. 

More precisely, the conjugacy class associated to $Q$ can be obtained as follows: Fix a basepoint $q_0 \in Q$ and lift the quotient map $[0,1] \to S^1$ to a continuous path $\gamma \colon [0,1] \to Q$ with $\gamma(0) = q_0$. By the path lifting property such a lift exists and is unique. Since $\gamma(0)$ and $\gamma(1)$ lie in the same fibre, there is a unique $h \in G$, such that $\gamma(0) = \gamma(1)\cdot h$. The conjugacy class of $h \in G$ is independent of $q_0$. 

The bundle $Q$ constructed above corresponds to the class of $g \in G$, whereas the trivial bundle corresponds to the conjugacy class of the neutral element $e \in G$, which only contains $e$, in contradiction with $g \neq e$. Therefore $\M$ can not be trivial.
%
\end{proof}

\begin{rem}
The above construction can easily be extended to non-trivial locally trivial W$^*$-bundles over more general spaces than $S^1$. Let $G$ be a finite group, let $X$ be a compact Hausdorff space, such that there exists a non-trivial principal $G$-bundle $\overline{X} \to X$. Since $G$ is discrete, $\overline{X}$ is just a covering space of $X$. Let $M$ be the II$_1$ factor constructed above. In particular, we have an isomorphism $\varphi \colon G \to \Out{M}$. There is an obstruction to lifting $\varphi$ to a homomorphism $\hat{\varphi} \colon G \to \Aut{M}$, which lives in $H^3(G, \mathcal{U}(1))$. By \cite[Theorem 4.1.3]{Su80} it is the only obstruction. Suppose it vanishes, then the topological bundle
\(
	B = \overline{X} \times_{\hat{\varphi}} M
\) 
is non-trivial. In fact, let $P$ be the associated principal $\Aut{M}$-bundle and denote by $\theta \colon \Aut{M} \to G$ the quotient map. By construction we have an isomorphism 
\(
	\overline{X} \cong P \times_{\theta} G
\) 
of principal $G$-bundles. Since $\overline{X}$ was supposed to be non-trivial, the same holds true for $P$. Thus, the associated W$^*$-bundle is also non-trivial.
\end{rem}

\end{document}